\DeclarePairedDelimiter{\ceil}{\lceil}{\rceil}
\DeclarePairedDelimiter{\floor}{\lfloor}{\rfloor}
\newtheorem{thm}{Theorem}[section]
\newtheorem{cor}[thm]{Corollary}
\newtheorem{definition}[thm]{Definition}
\newtheorem*{rem}{Remark}
\newtheorem{lem}[thm]{Lemma}
\newtheorem{conj}[thm]{Conjecture}
\newtheorem{prop}[thm]{Proposition}
\newtheorem*{claim}{Claim}
\newcommand{\F}{\mathcal{F}}
\newcommand{\G}{\mathcal{G}}
\newcommand{\cH}{\mathcal{H}}
\renewcommand{\t}{\hat{t}}
\renewcommand{\l}{\left}
\renewcommand{\r}{\right}
\title{On asymptotic local Tur\'an problems}
\date{Feburary 2023}
\author[1]{Peter Frankl \thanks{frankl.peter@renyi.hu}}
\author[2]{Jiaxi Nie\thanks{jiaxi\_nie@fudan.edu.cn} }
\affil[1]{Rényi Institute, Budapest, Hungary}
\affil[2]{Shanghai Center for Mathematical Sciences, Fudan University, Shanghai, China}
\begin{document}

\maketitle

\begin{abstract}
An $r$-uniform hypergraph has $(q,p)$-property if any set of $q$ vertices spans a complete sub-hypergraph on $p$ vertices. Let $t_r(n,q,p)$ be the minimum edge density of an $n$-vertex $r$-uniform hypergraph with {\em $(q,p)$-property} and let $t_r(q,p)=\lim_{n\to\infty}t_r(n,q,p)$. A disjoint union of $k$ complete hypergraphs has $(q,\ceil{q/k})$-property, which gives $t_r((q,\ceil{q/k}))\le 1/k^{r-1}$. The first author, Huang and Rödl showed that these constructions are the best asymptotically, that is, $\lim_{q\to\infty}t_r((q,\ceil{q/k}))=1/k^{r-1}$. They asked whether it is true for all real number $\gamma\ge1$ that $\lim_{q\to\infty}t_r((q,\ceil{q/\gamma}))=1/\floor{\gamma}^{r-1}$. In this paper, we give positive answers to this question for a small range of real numbers, and, on the other hand, provide new constructions that give negative answers for many other ranges. 
\end{abstract}

\section{Introduction}

A hypergraph $\cH$ is a collection of subsets of a finite set $V(\cH)$. Members of $V(\cH)$ are called vertices of $\cH$, and members of $\cH$ are called edges. An $r$-uniform hypergraph (or $r$-graph for short) is a hypergraph whose edges all have size $r$.  For integers $q\ge p\ge r\ge 2$, an $r$-uniform hypergraph $H$ has $(q,p)$-property if for any $A\in\binom{V(\cH)}{q}$ there exists $B\in\binom{A}{p}$ such that $\binom{B}{r}\subset \cH$. Let $T_r(n,q,p)$ be the minimum integer such that an $n$-vertex $r$-graph with $(q,p)$-property has $T_r(n,q,p)$ edges. By a simple averaging argument, Katona, Nemetz and Simonovits~\cite{katona1964graph} showed that $T_r(n,q,p)/\binom{n}{r}$ is monotone increasing. Therefore, the limits
\begin{equation*}
    t_r(q,p)=\lim_{n\to\infty}\frac{T_r(n,q,p)}{\binom{n}{r}}
\end{equation*}
exist. For the graph case, Erdös and Spencer~\cite{erdos1974probabilistic} showed that
\begin{equation}\label{equation:graph}
    t_2(q,p)=1/\l\lfloor\frac{q-1}{p-1}\r\rfloor.
\end{equation}
For general $r$, the first author and Stechkin~\cite{stechkin1981local} proved that
\begin{equation}\label{equation:dense}
    t_r(q,p)=1~~\text{if}~q\le\frac{r}{r-1}(p-1).
\end{equation}
In this paper, we focus on the asymptotic behavior of $t_r(q,p)$.

\begin{definition}
    For every real number $\gamma\ge1$, let
\begin{equation*}
    \t_r(\gamma)=\lim_{q\to\infty}t_r(q,\lceil q/\gamma \rceil).
\end{equation*}
\end{definition}

See the Appendix for the proof of the existence of the limits.

Clearly, $\t_r(\gamma)$ is monotone non-increasing. Equation (\ref{equation:graph}) implies
\begin{equation}\label{equation:graph_asymp}
    \t_2(\gamma)=1/\floor{\gamma}
\end{equation}
for every $\gamma\ge1$, and equation (\ref{equation:dense}) implies
\begin{equation}\label{equation:dense_asymp}
    \t_r(\gamma)=1
\end{equation}
for every $r\ge 2$ and $1\le \gamma<\frac{r}{r-1}$. In~\cite{frankl1984asymptotic} it is proved that $\t_3(2)=\frac{1}{4}$. This was recently generalized by the first author, Huang and Rödl~\cite{frankl2021local} who showed the following:
\begin{thm}[\cite{frankl2021local}]\label{theorem:Frankl-Huang-Rodl}
    For all integers $r\ge 2$, $k\ge 1$,
    \begin{equation}\label{equation:integer}
    \t_r(k)=\frac{1}{k^{r-1}}.
    \end{equation}
    For every integers $p\ge 3$,
    \begin{equation*}
    t_3(2p+1,p+1)=\frac{1}{4}.
    \end{equation*}
\end{thm}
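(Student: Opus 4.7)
My approach treats the two parts of Theorem~\ref{theorem:Frankl-Huang-Rodl} separately, though both upper bounds come from a single construction.

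\textbf{Upper bounds.} Partition $V$ into $k$ (respectively, $2$) nearly equal parts and take $\mathcal{H}$ to be the union of complete $r$-graphs on each part. Pigeonhole yields the $(q,\lceil q/k\rceil)$-property (for the second statement, $\lceil(2p+1)/2\rceil=p+1$), and counting the edges gives density $1/k^{r-1}+o(1)$ (resp. $1/4+o(1)$).

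\textbf{Lower bound for $\t_r(k)\geq 1/k^{r-1}$.} I would induct on $r$, using equation \eqref{equation:graph_asymp} for the base case $r=2$. The naive averaging---choose a witness $\lceil q/k\rceil$-clique $C_Q$ in each $q$-subset $Q$, sum $\binom{|C_Q|}{r}$ over $Q$, and bound $|\{Q: e\subseteq C_Q\}|\leq\binom{n-r}{q-r}$ for each edge $e$---yields only $|\mathcal{H}|/\binom{n}{r}\geq \binom{\lceil q/k\rceil}{r}/\binom{q}{r}\to 1/k^r$, a factor of $k$ short. To close the gap I would try to localise at a well-chosen vertex and produce an $(r-1)$-uniform substructure on which the inductive bound $\t_{r-1}(k)\geq 1/k^{r-2}$ can be applied, with the missing factor of $1/k$ recovered from the localisation. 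The \emph{main obstacle} is that the naive link $\mathcal{H}_v$ does not inherit the $(q-1,\lceil(q-1)/k\rceil)$-property: in the extremal construction $\mathcal{H}_v$ is supported on a single part, so a $(q-1)$-subset of $V\setminus\{v\}$ avoiding that part contains no clique at all. A plausible fix is to condition on $v$ and on an appropriate ``core'' subset of $V\setminus\{v\}$ (for instance, the set of vertices lying in many witness cliques through $v$), so that the restricted link does satisfy a genuine local property of the correct shape.

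\textbf{Lower bound for $t_3(2p+1,p+1)\geq 1/4$ when $p\geq3$.} Part one already yields $t_3(2p+1,p+1)\to 1/4$ as $p\to\infty$; the sharpness content is that $1/4$ is attained for every $p\geq3$. I would proceed via stability: combining the $(2p+1,p+1)$-property with the asymptotic lower bound, any $3$-graph of density below $1/4+\varepsilon$ must be $o(n^3)$-close to a $2$-part construction. The hypothesis $p\geq3$ enters because the witness cliques then have $p+1\geq4$ vertices, and in a near-bipartite $3$-graph every $4$-clique must have all $4$ vertices in the same part---a cross $4$-clique would require triples spanning both parts, which essentially vanish in the extremal construction---so the witness cliques lock into single parts. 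A final counting step exploiting this rigidity should rule out density strictly below $1/4$; the delicate case is the smallest one, $p=3$, which is the closest to the excluded regime.
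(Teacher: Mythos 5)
Your upper bound constructions are correct and standard. The difficulty, as you recognise, is entirely in the lower bounds, and there your proposal has genuine gaps on both counts.

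For $\t_r(k)\ge 1/k^{r-1}$, you correctly identify the obstacle with inducting on $r$ via links, but the proposed repair (``condition on a core subset of vertices lying in many witness cliques'') is not made concrete, and it is far from clear it can be made to work. The paper does not reprove Theorem~\ref{theorem:Frankl-Huang-Rodl} directly (it is cited from~\cite{frankl2021local}), but Section~\ref{section:general} proves the strictly more general Theorem~\ref{theorem:general_lower_bounds}, which at integer $\gamma=k$ gives exactly this lower bound, and it does so by a route that sidesteps your obstacle entirely. There is no induction on the uniformity $r$ and no vertex localisation. Instead one works directly with the $r$-graph: Lemma~\ref{lemma:hole} shows that removing a ``$(w,v)$-hole'' (a $w$-set whose clique number $v$ is smaller than the proportion demanded) from an $r$-graph with $(q,p)$-property leaves an $r$-graph with $(q-w,p-v)$-property, and the ``excess'' $sp-tq$ strictly increases under this operation, so after boundedly many deletions the surviving induced $r$-graph on $X_i$ has no small holes at all, i.e.\ it satisfies \emph{every} local property $(i,\lceil i/k\rceil)$ for $i$ up to $sl$. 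Lemma~\ref{lemma:dense} then extracts the density by a degree count: any $i$-vertex induced subgraph must contain a vertex of degree at least $\binom{\lceil i/k\rceil-1}{r-1}$, and summing over $i$ (with a telescoping/convexity claim inside the lemma) gives $|\mathcal{F}|\ge \frac{s}{t}\binom{tl}{r}$, hence density $\ge(t/s)^{r-1}-o(1)$. It is precisely the hole-removal step that recovers the factor of $1/k$ your naive averaging loses, not any recursion in $r$.

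For $t_3(2p+1,p+1)=\frac14$ at every $p\ge3$, your stability sketch is only a heuristic: it asserts without proof a quantitative near-bipartite structure theorem for $3$-graphs of density just below $1/4$ with the $(2p+1,p+1)$-property, and then a rigidity step locking witness cliques into one part. Neither step is supplied, and as you note $p=3$ (the case $t_3(7,4)=\frac14$, which the paper uses as an input to Corollary~\ref{corollary:7/3} rather than proving) is exactly the regime where slack is smallest. Until those are made precise this half of the argument is a genuine gap; the paper offers no proof of it to compare against, since it is imported wholesale from~\cite{frankl2021local}.
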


They asked whether it is true that for every real number $\gamma\ge 1$ 
\begin{equation}\label{equation:mainquestion}
    \t_r(\gamma)=1/\floor{\gamma}^{r-1}.
\end{equation}

Towards (\ref{equation:mainquestion}), we first prove the following general lower bounds:
\begin{thm}\label{theorem:general_lower_bounds}
    For every integer $r\ge2$, and real number $\gamma\ge1$,
    \begin{equation*}
        \t_r(\gamma)\ge\frac{1}{\gamma^{r-1}}
    \end{equation*}
\end{thm}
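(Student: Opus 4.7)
Fix $\gamma\ge 1$, and let $\cH$ be an $n$-vertex $r$-graph with the $(q,\lceil q/\gamma\rceil)$-property; write $p=\lceil q/\gamma\rceil$ and $d=|\cH|/\binom{n}{r}$. The plan is to show $d\ge 1/\gamma^{r-1}-o_q(1)$ as $q\to\infty$. The standard averaging over $q$-subsets (each containing a $p$-clique and hence $\binom{p}{r}$ edges of $\cH$) yields
\[
d \;\ge\; \binom{p}{r}\Big/\binom{q}{r} \;=\; \prod_{i=0}^{r-1}\frac{p-i}{q-i} \;\longrightarrow\; \frac{1}{\gamma^{r}},
\]
which is a factor of $\gamma$ too weak. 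Similarly, since $\lceil q/\lceil\gamma\rceil\rceil\le p$ the hypergraph $\cH$ also has the $(q,\lceil q/\lceil\gamma\rceil\rceil)$-property, so a direct application of Theorem~\ref{theorem:Frankl-Huang-Rodl} with $k=\lceil\gamma\rceil$ only gives $d\ge 1/\lceil\gamma\rceil^{r-1}$, which is likewise insufficient whenever $\gamma$ is not close to an integer.

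To recover the missing factor, I would analyze the link structure. For each $v\in V(\cH)$, the link $\cH_v$ is an $(r-1)$-graph on $V\setminus\{v\}$, and the hypothesis yields a \emph{disjunctive} property on it: for every $(q-1)$-subset $S\subseteq V\setminus\{v\}$, the $q$-set $S\cup\{v\}$ contains a $p$-clique $B$; if $v\in B$ then $B\setminus\{v\}$ is a $(p-1)$-clique in $\cH_v[S]$, while if $v\notin B$ then $B$ is itself a $p$-clique in $\cH[S]$. A double count over ordered pairs $(v,A)$ with $v\in A\in\binom{V}{q}$ shows that the first case covers exactly a $p/q$ fraction of the pairs, so that summed over $v$ there are at least $p\binom{n}{q}$ pairs $(v,S)$ for which $\cH_v[S]$ contains a $(p-1)$-clique. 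Pairing this witness count with an averaging that upper-bounds the number of such triples $(v,S,B')$ in terms of the codegrees of $\cH$ should recover a gain of $p/q\to 1/\gamma$ over the trivial bound, producing $d\ge 1/\gamma^{r-1}-o_q(1)$.

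The main obstacle is that the link $\cH_v$ does not literally satisfy the $(q-1,p-1)$-property --- as the extremal example of $\lceil\gamma\rceil$ disjoint cliques shows, the link of any vertex there has no nontrivial local property of bounded order. A straightforward induction on $r$ through the links is therefore impossible, and one must instead blend the two branches of the disjunction, $v$-in-clique versus $v$-out-of-clique, into a single averaging using that their contributions partition the $q$-sets with weights $p/q+(q-p)/q=1$. The hard part will be to show that this weighted averaging is compatible with the basic $(q,p)$-property averaging without introducing error terms that wash out the desired factor of $\gamma$; this is where any calculation will really have to be carried out, and it is likely the only place where the proof is more than bookkeeping.
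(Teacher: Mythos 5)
Your proposal correctly identifies the two obstacles --- the trivial averaging over $q$-sets yields only $1/\gamma^r$, and the link $\cH_v$ does not inherit a $(q-1,p-1)$-property --- but the step you flag as ``the hard part'' is not merely bookkeeping: the counting you sketch does not, by itself, recover the missing factor of $\gamma$. Your double count shows that, on average over $v$, at least a $p/q$-fraction of $(q-1)$-sets $S\subseteq V\setminus\{v\}$ contain a $(p-1)$-clique of $\cH_v$; call this fraction $\alpha_v$, so the average of $\alpha_v$ is at least $p/q$. The natural way to turn this into a density bound is to count pairs consisting of an edge of $\cH_v$ and a $(q-1)$-set containing it: each of the $\ge\alpha_v\binom{n-1}{q-1}$ good sets $S$ contributes at least $\binom{p-1}{r-1}$ edges, and each edge of $\cH_v$ lies in $\binom{n-r}{q-r}$ such sets, giving
\[
\frac{|\cH_v|}{\binom{n-1}{r-1}}\;\ge\;\alpha_v\cdot\frac{\binom{p-1}{r-1}}{\binom{q-1}{r-1}}\;\approx\;\alpha_v\left(\frac{p}{q}\right)^{r-1},
\]
and averaging over $v$ returns only $(p/q)^r\approx 1/\gamma^{r}$ --- exactly the trivial bound you started from. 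The phrase ``an averaging that upper-bounds the number of triples $(v,S,B')$ in terms of the codegrees of $\cH$'' is precisely where the theorem lives, and no such inequality is produced or even stated; as written the proposal is an outline whose crux is unresolved, and the extremal example (disjoint cliques, where the links are themselves single cliques) makes the needed inequality look delicate at best.

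For comparison, the paper's argument is structurally different and avoids links entirely. It reduces to rational $\gamma=s/t$ and uses a degree-peeling lemma (Lemma~\ref{lemma:dense}): if an $r$-graph on $sl$ vertices has $(i,\lceil ti/s\rceil)$-property for every $i\le sl$, then iteratively removing a maximum-degree vertex, with the degree at stage $i$ bounded below by $\binom{\lceil ti/s\rceil-1}{r-1}$ via the clique guaranteed in the remaining $i$ vertices, yields a total edge count whose density tends to $(t/s)^{r-1}$. To obtain that full range of local properties from the single given $(sL,tL)$-property, the paper iteratively deletes bounded-size ``holes'' via Lemma~\ref{lemma:hole}, tracking the monotone excess $e(q,p)=sp-tq$ to show the deletion process halts after $O(q)$ steps and costs only $o(n)$ vertices. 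If you want to complete a proof, this peeling route is the one to pursue.
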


The next theorem, together with some known results, gives better lower bounds confirming (\ref{equation:mainquestion}) for a small range of real numbers.
\begin{thm}\label{theorem:better_lower_bound}
    For integers $q\ge r\ge2$, and real number $\gamma\ge1$, 
    \begin{equation*}
        \t_r(\gamma)\ge t_r(q,\ceil{q/\gamma}).
    \end{equation*}
    As a result, if $\t_r(\alpha)=t_r(q,\ceil{q/\alpha})$ for some $\alpha\ge1$, then for every real number $\gamma$ with $\alpha\le\gamma<\frac{q}{\ceil{q/\alpha}-1}$,
    \begin{equation*}
        \t_r(\gamma)=\t_r(\alpha).
    \end{equation*}
\end{thm}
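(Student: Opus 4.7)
The plan is to prove the first inequality by an iterative peeling argument and then derive the second statement from the first together with the monotonicity $\t_r(\gamma) \le \t_r(\alpha)$ for $\gamma \ge \alpha$ noted in the introduction. Set $p := \ceil{q/\gamma}$ and $p' := \ceil{q'/\gamma}$; when $p < r$ the claim is vacuous since $t_r(q,p) = 0$, so I may assume $p \ge r$. I will actually show the finite-$q'$ statement $t_r(q', p') \ge t_r(q, p)$ for every sufficiently large integer $q'$; sending $q' \to \infty$ yields $\t_r(\gamma) \ge t_r(q, p)$ directly from the definition of $\t_r$.

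The key tool is a one-step peeling lemma: if an $r$-graph $\mathcal{G}$ on a set $W$ has the $(Q, P)$-property with $P \ge r$ and $A \subseteq W$ is a $q$-set containing no $p$-clique of $\mathcal{G}$, then $\mathcal{G}[W \setminus A]$ has the $(Q - q,\, P - p + 1)$-property. For any $(Q-q)$-set $Y \subseteq W \setminus A$ the union $Y \cup A$ has $Q$ vertices and hence contains a $P$-clique $B$; since $A$ has no $p$-clique, $|B \cap A| \le p - 1$, so $B \cap Y$ is a clique (being a subset of the clique $B$) of size at least $P - p + 1$ inside $Y$.

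To prove the first inequality, take any $\mathcal{H}$ on $n$ vertices with the $(q', p')$-property, set $\mathcal{H}^{(0)} := \mathcal{H}$, and iterate: while $\mathcal{H}^{(k)}$ fails the $(q, p)$-property, pick a bad $q$-subset $A_{k+1} \subseteq V(\mathcal{H}^{(k)})$ and let $\mathcal{H}^{(k+1)} := \mathcal{H}^{(k)}[V(\mathcal{H}^{(k)}) \setminus A_{k+1}]$. By induction via the peeling lemma, $\mathcal{H}^{(k)}$ satisfies the $(q' - kq,\, p' - k(p-1))$-property. Define $k_* := \ceil{((q'-q)-(p'-p))/(q-p+1)}$, which is a finite constant in $n$ because the denominator $q-p+1$ is at least $1$ (since $p \le q$). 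A direct inclusion-exclusion shows that any $r$-graph satisfying the $(q' - k_* q,\, p' - k_*(p-1))$-property already satisfies the $(q, p)$-property: for any $q$-set $A_0$, extend $A_0$ to a $(q' - k_* q)$-set, take the guaranteed $(p' - k_*(p-1))$-clique, and observe it must meet $A_0$ in at least $p$ vertices. Hence the iteration halts at some step $k^{**} \le k_*$ with $\mathcal{H}^{(k^{**})}$ an induced subhypergraph on $n - k^{**} q$ vertices having the $(q, p)$-property. Since $k^{**}$ is bounded independently of $n$, the bound $e(\mathcal{H}) \ge e(\mathcal{H}^{(k^{**})}) \ge T_r(n - k^{**} q,\, q,\, p)$ together with $\binom{n - k^{**}q}{r}/\binom{n}{r} \to 1$ gives $T_r(n, q', p')/\binom{n}{r} \ge t_r(q, p) - o(1)$ as $n \to \infty$, so $t_r(q', p') \ge t_r(q, p)$, and sending $q' \to \infty$ proves the first inequality.

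For the second part, the range $\alpha \le \gamma < q/(\ceil{q/\alpha} - 1)$ is exactly the set of $\gamma$ satisfying $\ceil{q/\gamma} = \ceil{q/\alpha}$, so $t_r(q, \ceil{q/\gamma}) = \t_r(\alpha)$; the first part gives $\t_r(\gamma) \ge \t_r(\alpha)$ and the monotonicity of $\t_r$ gives the reverse, producing equality. The main delicate point I anticipate is verifying that the intermediate values $P_k := p' - k(p-1)$ remain $\ge r$ throughout so that the cliques produced by the peeling lemma are genuine rather than vacuous. Using the defining inequality $q > \gamma(p-1)$, a short calculation shows $k_* < (p' - r)/(p - 1)$ for all sufficiently large $q'$, which forces $P_{k^{**}} \ge P_{k_*} > r$ throughout the iteration; this is also the reason the finite-$q'$ statement is established only for large $q'$, but that suffices for the limit.
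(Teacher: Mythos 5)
Your proposal is correct and follows the same underlying idea as the paper: both arguments iteratively peel off a $q$-set with no $p$-clique (the paper's Lemma~2.1, your one-step peeling lemma) from a $(q',p')$-property hypergraph until the remaining induced subhypergraph has the $(q,p)$-property, and then let $n\to\infty$ and $q'\to\infty$. The paper packages the iteration through two lemmas ($t_r(q_1+q_2,p_1+p_2-1)\ge\min\{t_r(q_1,p_1),t_r(q_2,p_2)\}$, iterated to $t_r(kq+q,k(p-1)+p)\ge t_r(q,p)$) and then picks $q'=(k+1)q$, which makes the bookkeeping automatic; you instead iterate directly for arbitrary large $q'$, bounding the number of peelings by $k_*$, which requires a bit more arithmetic care at the end. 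One small point you glide over: the extension step ``extend $A_0$ to a $(q'-k_*q)$-set'' also needs $q'-k_*q\ge q$, not just $P_{k_*}>r$; this does hold for $q'$ large enough (by the same kind of estimate, using $p-1<q/\gamma\le p$), and if it failed one could instead take a $(q'-k_*q)$-subset of $A_0$ and use $P_{k_*}\ge p$ directly, so the argument survives, but the condition should be verified explicitly alongside $P_{k_*}>r$.
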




\begin{rem}
    Since $t_2(q,2)=\t_2(q-1)=\frac{1}{q-1}$ and $t_r(r,r)=\t_r(1)=1$, Theorem~\ref{theorem:better_lower_bound} implies (\ref{equation:graph_asymp}) and (\ref{equation:dense_asymp}).
\end{rem}

Also, since $t_3(7,4)=\t_3(2)=1/4$ by Theorem~\ref{theorem:Frankl-Huang-Rodl}, we have the following corollary:

\begin{cor}\label{corollary:7/3}
    For $2\le\gamma<7/3$,
    \begin{equation}\label{equation:7/3}
    \t_3(\gamma)=\frac{1}{4}.
    \end{equation}
\end{cor}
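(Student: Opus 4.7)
The plan is to apply Theorem~\ref{theorem:better_lower_bound} directly with $r = 3$, $\alpha = 2$, and $q = 7$. To activate it, I would first verify the hypothesis $\hat{t}_3(\alpha) = t_3(q, \lceil q/\alpha \rceil)$, which in our case reads $\hat{t}_3(2) = t_3(7, 4)$, since $\lceil 7/2 \rceil = 4$. Both sides equal $1/4$: the first part of Theorem~\ref{theorem:Frankl-Huang-Rodl} applied with $r = 3$, $k = 2$ gives $\hat{t}_3(2) = 1/2^{3-1} = 1/4$, and its second part applied with $p = 3$ gives $t_3(2\cdot 3 + 1,\, 3+1) = t_3(7,4) = 1/4$.

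With the hypothesis verified, Theorem~\ref{theorem:better_lower_bound} will immediately yield $\hat{t}_3(\gamma) = \hat{t}_3(2) = 1/4$ for every real $\gamma$ satisfying
\[
    2 \le \gamma < \frac{q}{\lceil q/\alpha \rceil - 1} = \frac{7}{4-1} = \frac{7}{3},
\]
which is precisely the interval claimed in Corollary~\ref{corollary:7/3}.

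I do not expect any real obstacle here, since the corollary is a direct unpacking of the two theorems just cited. The only interpretive step is choosing $q = 7$ as the witness value: this is dictated by the second part of Theorem~\ref{theorem:Frankl-Huang-Rodl}, which is exactly what supplies the exact equality $t_3(q, \lceil q/2 \rceil) = 1/4$ needed to trigger Theorem~\ref{theorem:better_lower_bound} and produce the right endpoint $7/3$.
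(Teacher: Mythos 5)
Your proposal is correct and follows the paper's own route exactly: identify $\alpha=2$, $q=7$, pull $\hat{t}_3(2)=1/4$ from the first part of Theorem~\ref{theorem:Frankl-Huang-Rodl} and $t_3(7,4)=1/4$ from its second part (with $p=3$), then invoke Theorem~\ref{theorem:better_lower_bound} to get equality on $[2,7/3)$. The paper states this more tersely but the content is identical.
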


Recently Fang, Gao, Ma and Song~\cite{Fang2023Onlocal} have also proved (\ref{equation:7/3}).

Tur\'an~\cite{turan1969applications} proposed the following notorious conjectures:
\begin{conj}[\cite{turan1969applications}]\label{conjecture:Turan}
For every integers $k\ge 2$,
    \begin{equation*}
        t_3(k+1,3)=\frac{4}{k^2}.
    \end{equation*}
\end{conj}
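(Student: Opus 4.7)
My first step is a direct reformulation of Conjecture~\ref{conjecture:Turan} as the classical Tur\'an-density problem for complete 3-graphs. A 3-graph $\cH$ has the $(k+1,3)$-property precisely when every $(k+1)$-subset of vertices contains at least one edge of $\cH$, equivalently when the complementary 3-graph $\binom{V(\cH)}{3}\setminus\cH$ is $\K{k+1}{3}$-free. Therefore
\begin{equation*}
    T_3(n,k+1,3)=\binom{n}{3}-\ex(n,\K{k+1}{3}),
\end{equation*}
and dividing by $\binom{n}{3}$ and letting $n\to\infty$ gives $t_3(k+1,3)=1-\pi(\K{k+1}{3})$, where $\pi(\cdot)$ denotes the Tur\'an density. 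So Conjecture~\ref{conjecture:Turan} is equivalent to the assertion $\pi(\K{k+1}{3})=(k-2)(k+2)/k^2$.

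For the ``easy'' direction $t_3(k+1,3)\le 4/k^2$, i.e.\ $\pi(\K{k+1}{3})\ge 1-4/k^2$, I would exhibit Tur\'an-type blow-up constructions. Already for $k=3$ one partitions $[n]$ into three nearly equal parts $V_1,V_2,V_3$ arranged cyclically and declares $\{x,y,z\}$ an edge iff either it is transversal (one vertex in each part) or it has two vertices in some $V_i$ together with one vertex in $V_{i+1}$ (indices mod $3$). A short case analysis on how a hypothetical $\K{4}{3}$ can be distributed among the parts shows the construction is $\K{4}{3}$-free, and a direct count of the two edge types yields edge density $2/9+3/9=5/9-o(1)$. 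For general $k\ge 3$ the plan is the standard iterated cyclic blow-up: replace each $V_i$ by a recursively built near-extremal $\K{k}{3}$-free 3-graph and tune the part sizes so the resulting density satisfies a recursion whose closed form is $1-4/k^2$. This half is routine but bookkeeping-heavy.

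The matching upper bound $\pi(\K{k+1}{3})\le 1-4/k^2$ is the genuinely hard direction — indeed it is the eighty-year-old open problem at the heart of hypergraph Tur\'an theory. My plan would be two-pronged: first attack the base case $k=3$ with Razborov's flag-algebra semidefinite programming framework, running on all 3-graphs of size up to $N$ for $N=7,8,9,\ldots$ in the hope of producing a dual certificate that pinches $\pi(\K{4}{3})$ down to $5/9$; then try to bootstrap to arbitrary $k$ via a link-graph decomposition, showing that any sufficiently dense $\K{k+1}{3}$-free host must contain a vertex whose link is dense enough to realise a near-extremal $\K{k}{3}$-free pattern and inducting on $k$. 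The main obstacle — and the reason every known technique has failed here — is the \emph{abundance of near-extremal configurations}: iterated blow-ups and weighted rotations produce uncountably many non-isomorphic 3-graphs of density exactly $1-4/k^2$, so no single flag-algebra witness can be tight across the whole family, and standard stability or density-increment arguments collapse at the critical density. Realistically I would expect, at best, to recover known numerical bounds such as $\pi(\K{4}{3})\le 0.5617$ rather than a complete proof.
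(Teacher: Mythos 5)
The statement you were asked to prove is \textbf{Conjecture}~\ref{conjecture:Turan} — Tur\'an's conjecture on $t_3(k+1,3)$, equivalently on $\pi(\K{k+1}{3})$. The paper does not prove it and does not claim to; it is cited as an open conjecture (due to Tur\'an) and used only hypothetically to derive Corollary~\ref{corollary:turan}. So there is no ``paper's own proof'' to compare against, and you were right not to manufacture one.

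Your reformulation is correct and worth stating cleanly: $\cH$ has $(k+1,3)$-property iff every $(k+1)$-set spans an edge, iff the complement $\binom{V}{3}\setminus\cH$ is $\K{k+1}{3}$-free, so $T_3(n,k+1,3)=\binom{n}{3}-\ex(n,\K{k+1}{3})$ and $t_3(k+1,3)=1-\pi(\K{k+1}{3})$; the conjectured value $4/k^2$ is exactly $1-(1-4/k^2)$, matching Tur\'an's conjectured density. Your base-case construction for $k=3$ (the cyclic tripartition with transversal triples plus the $\{2\text{ in }V_i, 1\text{ in }V_{i+1}\}$ triples) and its density count $2/9+3/9=5/9$ are both correct, and the generalization to $k>3$ by iterated blow-up does give the $1-4/k^2$ lower bound on the density. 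You also correctly identify that the matching upper bound on $\pi(\K{k+1}{3})$ — even the single case $\pi(\K{4}{3})=5/9$ — is a long-standing open problem, that flag algebras so far only give $\pi(\K{4}{3})\lesssim 0.5617$, and that the profusion of non-isomorphic extremal constructions is a structural obstruction to all current methods. In short: your assessment is accurate, and the only thing to flag is that one should treat this as a hypothesis (as the paper does in Corollary~\ref{corollary:turan}), not as a provable lemma.
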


This conjecture, if true, implies the following corollary:
\begin{cor}\label{corollary:turan}
If Conjecture~\ref{conjecture:Turan} is true, then for any integer $k\ge 2$ and $\gamma<\frac{k+1}{2}$,
\begin{equation*}
    \t_3(\gamma)\ge\frac{4}{k^2}.
\end{equation*}
In particular, for every integer $t\ge 1$ and $t\le\gamma<t+\frac{1}{2}$, since $\t_3(t)=t_3(2t+1,3)=1/t^2$,
\begin{equation*}
    \t_3(\gamma)=\frac{1}{t^2}.
\end{equation*}
\end{cor}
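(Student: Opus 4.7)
The plan is to deduce the corollary directly from Theorem~\ref{theorem:better_lower_bound} combined with the monotonicity of $\t_3$ and the input of Conjecture~\ref{conjecture:Turan}. The only real calculation is identifying the range of $\gamma$ on which the ceiling $\lceil (k+1)/\gamma\rceil$ equals~$3$, after which the lower bound propagates by monotonicity.

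For the first assertion, I would fix $q=k+1$ and note that $\lceil q/\gamma\rceil=3$ precisely when $(k+1)/3\le \gamma<(k+1)/2$. On this range, Theorem~\ref{theorem:better_lower_bound} together with Conjecture~\ref{conjecture:Turan} gives
\[
\t_3(\gamma)\;\ge\; t_3\!\l(k+1,\lceil (k+1)/\gamma\rceil\r)\;=\;t_3(k+1,3)\;=\;\frac{4}{k^2}.
\]
To extend this to all $\gamma<(k+1)/2$, I would use that $\t_3$ is non-increasing: for $1\le\gamma<(k+1)/3$ (which requires $k\ge 2$ so that $(k+1)/3\ge 1$), pick any $\gamma'\in[(k+1)/3,(k+1)/2)$; then $\gamma\le\gamma'$ yields $\t_3(\gamma)\ge\t_3(\gamma')\ge 4/k^2$.

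For the ``in particular'' part, I would specialize $k=2t$, so that $(k+1)/2=t+\tfrac12$ and $4/k^2=1/t^2$. The first part then gives the lower bound $\t_3(\gamma)\ge 1/t^2$ for every $\gamma<t+\tfrac12$. For the matching upper bound, since $\gamma\ge t$, monotonicity together with Theorem~\ref{theorem:Frankl-Huang-Rodl} (the integer case with $r=3$, $k=t$) gives $\t_3(\gamma)\le\t_3(t)=1/t^2$, which closes the inequality.

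There is no real obstacle here; the proof is essentially a bookkeeping argument. The only thing to verify carefully is the equivalence $\lceil(k+1)/\gamma\rceil=3\Longleftrightarrow(k+1)/3\le\gamma<(k+1)/2$ and the degenerate cases where the inequality ranges on $\gamma$ intersect nontrivially with $[1,\infty)$, both of which are routine.
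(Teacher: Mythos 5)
Your proof is correct and is essentially the same argument the paper intends — the paper simply remarks that the corollary ``follows easily from Theorem~\ref{theorem:better_lower_bound}'' without spelling it out, and your bookkeeping (fix $q=k+1$, use Conjecture~\ref{conjecture:Turan} to evaluate $t_3(k+1,3)$, propagate via monotonicity of $\t_3$, and close the ``in particular'' case with $k=2t$ and the upper bound $\t_3(\gamma)\le\t_3(t)=1/t^2$) is exactly the intended derivation. One tiny simplification you might note: rather than splitting into the range where $\lceil(k+1)/\gamma\rceil=3$ and then appealing to monotonicity of $\t_3$, you could observe directly that $\gamma<(k+1)/2$ forces $\lceil(k+1)/\gamma\rceil\ge 3$ and use monotonicity of $t_3(q,\cdot)$ in its second argument; both routes are equally valid.
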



On the other hand, we find new constructions with edge density smaller than that of the disjoint union of cliques, which give negative answers to (\ref{equation:mainquestion}) for many ranges of real numbers.
\begin{thm}\label{theorem:tightcycle_construction}
    Let integers $r\ge3$, $k\ge 1$.
    \begin{itemize}
        \item[(1)] For every integer $m\ge 1$, and real number $k+\frac{m}{m+1}\le \gamma<k+\frac{m+1}{m+2}$,
        \begin{equation}\label{equation:upper_bounds}
            \t_r(\gamma)\le \l(k-1+(2m+1)\l((m+1)^r-m^r\r)^{-\frac{1}{r-1}}\r)^{-r+1}
        \end{equation}
        \item[(2)] For every integer $3\le m\le r$, and real number $k+\frac{1}{m-1}\le \gamma<k+\frac{1}{m-2}$,
        \begin{equation}\label{equation:upper_bounds2}
            \t_r(\gamma)\le \l(k-1+\l(\sum^m_{i=1}(-1)^{i+1}\binom{m}{i}\l(1-\frac{i}{m}\r)^r\r)^{-\frac{1}{r-1}}\r)^{-r+1}
        \end{equation}
    \end{itemize}
\end{thm}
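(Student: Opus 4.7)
The plan is to exhibit explicit constructions realizing the stated density bounds and to verify the $(q,\lceil q/\gamma\rceil)$-property via an averaging/pigeonhole argument. Both parts share a common template: partition the vertex set as $V = U_1 \cup \cdots \cup U_{k-1} \cup W_1 \cup \cdots \cup W_N$, where each $U_j$ has size $\beta n$, each $W_i$ has size $\alpha\beta n$, and $N,\alpha,\beta$ are tuned so the sizes sum to $n$. The edge set comprises all $r$-subsets sitting inside a single $U_j$, contributing $(k-1)\beta^r$ to the edge density asymptotically, plus a carefully designed family of $r$-subsets supported on the small parts. This family is chosen so that certain explicit subsets of $\bigcup_i W_i$ form cliques in the $r$-graph. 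The $(q,\lceil q/\gamma\rceil)$-property then follows as soon as every $q$-set meets some $U_j$, or one of the designated small cliques, in at least $\lceil q/\gamma\rceil$ vertices.

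For Part~(1), take $N = 2m+1$ and declare as small edges every $r$-subset contained in some cyclic window $W_i \cup W_{i+1} \cup \cdots \cup W_{i+m}$ (indices mod $2m+1$); each such window is then a clique of size $(m+1)\alpha\beta n$. Each small edge belongs to a unique canonical window (the one whose leftmost class it uses), so the small-edge count is $(2m+1)\bigl(\binom{(m+1)\alpha\beta n}{r} - \binom{m\alpha\beta n}{r}\bigr)$, asymptotic to $(2m+1)((m+1)^r - m^r)(\alpha\beta)^r\binom{n}{r}$. Minimizing the total density $(k-1)\beta^r + (2m+1)((m+1)^r - m^r)(\alpha\beta)^r$ subject to $(k-1)\beta + (2m+1)\alpha\beta = 1$ by one-variable calculus gives $\alpha^{r-1} = ((m+1)^r - m^r)^{-1}$, and the density simplifies to $\beta^{r-1} = (k-1 + (2m+1)\alpha)^{-(r-1)}$, matching~(\ref{equation:upper_bounds}). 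For the clique property, write $q = q_U + q_W$ for the split of selected vertices between large and small parts. Averaging shows some $U_j$ contains at least $q_U/(k-1)$ selected vertices; since each $W_i$ lies in $m+1$ windows, some window contains at least $(m+1)q_W/(2m+1)$ selected vertices. If neither of these is $\ge q/\gamma$, then $q < (k-1)q/\gamma + (2m+1)q/((m+1)\gamma)$, forcing $\gamma < k + m/(m+1)$. Thus the construction has the $(q,\lceil q/\gamma\rceil)$-property whenever $\gamma \ge k + m/(m+1)$, covering the stated range.

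For Part~(2), take $N = m$ and declare as small edges every $r$-subset missing at least one $W_i$---equivalently, contained in some $(m-1)$-subset of the $W_i$'s; each such $(m-1)$-subset is a clique of size $(m-1)\alpha\beta n$. Inclusion--exclusion gives the small-edge count $\sum_{i=1}^m(-1)^{i+1}\binom{m}{i}\binom{(m-i)\alpha\beta n}{r}$, asymptotic to $m^r s(\alpha\beta)^r\binom{n}{r}$ with $s = \sum_{i=1}^m(-1)^{i+1}\binom{m}{i}(1-i/m)^r$. The analogous calculus optimization yields $\alpha^{r-1} = 1/(m^{r-1}s)$, so $m\alpha = s^{-1/(r-1)}$ and the density becomes $(k-1 + s^{-1/(r-1)})^{-(r-1)}$, matching~(\ref{equation:upper_bounds2}). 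Averaging over the $m$ cliques (each $W_i$ lies in $m-1$ of them) gives some $(m-1)$-subset containing at least $(m-1)q_W/m$ selected vertices, and combining with $q_U/(k-1)$ as in Part~(1) yields the $(q,\lceil q/\gamma\rceil)$-property whenever $\gamma \ge k + 1/(m-1)$.

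The main step that needs care is the density optimization, a routine Lagrange-multiplier computation that produces the compact closed form stated in the theorem; the pigeonhole/averaging argument for the clique property is just a linear inequality on the two proportions $q_U/q$ and $q_W/q$, and verifying that the asymptotic edge-count identities hold is straightforward from standard binomial expansions.
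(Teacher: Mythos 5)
Your proposal is correct and follows essentially the same route as the paper: both use blow-ups of the tight cycle $C^{m+1}_{2m+1}$ (Part 1) or $C^{m-1}_m$ (Part 2) together with $k-1$ disjoint cliques, optimize the block sizes, and verify the $(q,\lceil q/\gamma\rceil)$-property by averaging over the maximal cliques. The only difference is presentational: the paper packages the optimization and the averaging into general functionals $f(\cH)$ and $g(\cH,r)$ (with a union lemma via Jensen's inequality) and then evaluates them on the specific templates, whereas you carry out the Lagrange computation and the pigeonhole inequality directly for the two constructions; both yield identical formulas and cover the stated ranges.
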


Recently, Fang, Gao, Ma and Song~\cite{Fang2023Onlocal} also proved the upper bounds (\ref{equation:upper_bounds2}) for $k=1$ using the same construction. Moreover, they find constructions that give upper bounds better than (\ref{equation:upper_bounds}) when 
$r=3$ and $m\ge 2$.


The rest of this paper is organized as follows: 

In Section~\ref{section:general} we generalize ideas from~\cite{frankl2021local} to prove the general lower bounds, that is, Theorem~\ref{theorem:general_lower_bounds}. 

In Section~\ref{section:better}, we proved Theorem~\ref{theorem:better_lower_bound} which, together with some known results, improves the lower bounds of Theorem~\ref{theorem:general_lower_bounds}.

In Section~\ref{section:tight}, we develop a framework that generates constructions for upper bounds and then use it to prove Theorem~\ref{theorem:tightcycle_construction}. 

\section{General lower bounds}\label{section:general}

In this section, we prove Theorem~\ref{theorem:general_lower_bounds}. A {$(w,v)$-hole} of an $r$-graph $\cH$ is a set of vertices $W\subset V(\cH)$ such that $|W|=w$ and $v$ is the {\em clique number}, that is, the maximum integer such that there exists $A\in\binom{W}{v}$ such that $\binom{A}{r}\subset\cH$. For any $w\ge u>v$, the existence of a $(w,v)$-hole indicates that the hypergraph does not have $(w,u)$-property. 
\begin{lem}[Lemma 2.1 in~\cite{frankl2021local}]\label{lemma:hole}
    For integers $q> p\ge r$, let $\cH$ be an $r$-graph with $(q,p)$-property. If $\cH$ has a $(w,v)$-hole $W$ such that $w\le q-r$, then the induced sub-$r$-graph of $\cH$ on $V(\cH)-W$, $\cH\cap\binom{V(\cH)-W}{r}$, has $(q-w,p-v)$-property. 
\end{lem}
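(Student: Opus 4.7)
The plan is to verify the $(q-w,p-v)$-property of the induced hypergraph $\cH':=\cH\cap\binom{V(\cH)-W}{r}$ directly from the definitions. Given an arbitrary $A'\in\binom{V(\cH)-W}{q-w}$, I would form $A:=A'\cup W$; since $A'$ is disjoint from $W$, this $A$ is a $q$-subset of $V(\cH)$. (The hypothesis $w\le q-r$ ensures $q-w\ge r$, so the claim is not vacuous.)

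Next I would invoke the $(q,p)$-property of $\cH$ applied to $A$ to obtain $B\in\binom{A}{p}$ with $\binom{B}{r}\subset\cH$. The key observation is that $B\cap W$ is a subset of $W$ spanning a clique in $\cH$, because $\binom{B\cap W}{r}\subset\binom{B}{r}\subset\cH$. Since $W$ is a $(w,v)$-hole, the clique number of $W$ is $v$, which forces $|B\cap W|\le v$, and hence $|B-W|\ge p-v$. Any choice $B'\subset B-W$ with $|B'|=p-v$ then produces a $(p-v)$-subset of $A'$ whose $r$-subsets lie in $\binom{B}{r}\subset\cH$ and are disjoint from $W$, so they lie in $\cH'$. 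This is the required witness for the $(q-w,p-v)$-property.

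The main obstacle: there really isn't one. The entire argument is the ``extend by $W$'' trick combined with the pigeonhole-style bound $|B\cap W|\le v$ coming directly from the definition of a $(w,v)$-hole. The only minor point to record is the degenerate case $p-v\le 0$, where the $(q-w,p-v)$-property holds vacuously and there is nothing to prove.
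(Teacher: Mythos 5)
Your proof is correct and is the natural argument: extend the given $(q-w)$-set $A'$ by $W$ to a $q$-set, apply the $(q,p)$-property to get a clique $B$, then use the fact that $B\cap W$ spans a clique in $W$ (since $\binom{B\cap W}{r}\subset\binom{B}{r}\subset\cH$) together with the definition of a $(w,v)$-hole to bound $|B\cap W|\le v$, leaving a clique $B-W$ of size $\ge p-v$ inside $A'$ whose $r$-subsets avoid $W$ and hence lie in $\cH'$. The present paper does not reprove this lemma (it is quoted as Lemma~2.1 of \cite{frankl2021local}), but your argument is the standard one and, apart from the small clarification that the vacuous range should really be $p-v<r$ rather than $p-v\le 0$, there is nothing to add.
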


Since $\t_r(\gamma)$ is non-increasing, it suffices to show the lower bounds for rational numbers $\gamma=s/t$. We make use of the following lemma:

\begin{lem}[Generalization of Lemma 2.2 in~\cite{frankl2021local}]\label{lemma:dense}
    For integers $s> t$, $r\ge2$, and $l\ge1$. Suppose an $r$-graph $\mathcal{F}$ on $sl$ vertices has $(q,p)$-property for all pairs $(q,p)$ with $q\le sl$ and $p=\lceil tq/s \rceil$. Then
    \begin{equation*}
        |\mathcal{F}|\ge \frac{s}{t}\binom{tl}{r}.
    \end{equation*}
\end{lem}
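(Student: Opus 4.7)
The plan is to proceed by induction on $l$, reducing the problem on $sl$ vertices to one on $s(l-1)$ vertices via Lemma~\ref{lemma:hole}.

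The key identity is $\lceil tq/s \rceil - t = \lceil t(q - s)/s \rceil$ (which holds since $t$ is a positive integer). Suppose one can locate a subset $W \subseteq V(\mathcal{F})$ of size $s$ whose induced clique number is exactly $t$ --- the minimum possible by the $(s,t)$-property. Applying Lemma~\ref{lemma:hole} with $(w, v) = (s, t)$ to every pair $(q, \lceil tq/s \rceil)$ satisfying $s + r \le q \le sl$, one obtains that the induced sub-$r$-graph $\mathcal{F}' := \mathcal{F} \cap \binom{V(\mathcal{F}) \setminus W}{r}$ on $s(l-1)$ vertices inherits the $(q', \lceil tq'/s \rceil)$-property for every $q' \le s(l-1)$ (the few small $q'$ not covered by Lemma~\ref{lemma:hole} are handled by plain inheritance). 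The induction hypothesis then yields $|\mathcal{F}'| \ge \frac{s}{t}\binom{t(l-1)}{r}$.

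The remaining task is to bound below the edges of $\mathcal{F}$ incident to $W$ by $\frac{s}{t}\big[\binom{tl}{r} - \binom{t(l-1)}{r}\big]$. My strategy here is to iterate the peel-off, extracting disjoint holes $W_1, W_2, \ldots, W_l$ (each of size $s$ with induced clique number $t$ in the respective residual graph), and to show that the per-layer contributions --- the $t$-clique inside each $W_j$ together with the $r$-edges spanning $W_j$ and the union $W_1 \cup \cdots \cup W_{j-1}$, accounted for by applying the $(q, \lceil tq/s \rceil)$-properties to subsets straddling the layers --- telescope to the target $\frac{s}{t}\binom{tl}{r}$.

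The main obstacle is the existence of a hole $W$ of size $s$ with clique number exactly $t$. The $(s,t)$-property guarantees clique number at least $t$; the delicate case is when every $s$-subset has clique number at least $t+1$, for then Lemma~\ref{lemma:hole} delivers only a strictly weaker $(q', \lceil tq'/s \rceil - (v - t))$-property on $\mathcal{F}'$. In that scenario $\mathcal{F}$ satisfies a stronger local condition, and my fallback would be either to pick a hole of larger size $\alpha s$ with clique number exactly $\alpha t$ for some $\alpha \ge 2$ (which always exists for $\alpha = l$ since the full vertex set has clique number at least $tl$, and one chooses the smallest valid $\alpha$), or to derive the bound directly via a supersaturation argument exploiting the forced $(t+1)$-cliques in every $s$-subset. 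Handling this dichotomy cleanly and uniformly in $s, t, r$ --- together with the base case $l = 1$, where the bound $\frac{s}{t}\binom{t}{r}$ must be extracted from the $(s - j, t)$-properties that hold for all integers $0 \le j < s/t$ --- is the technical heart of the proof.
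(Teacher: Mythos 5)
The paper's proof is a direct degree-counting argument, not a hole-peeling induction, so you have taken a genuinely different route — and unfortunately your route has gaps that are not readily closed.

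The paper observes that any $i$-vertex induced subhypergraph of $\mathcal{F}$, having $(i,g(i))$-property with $g(i)=\lceil ti/s\rceil$, contains a vertex of degree at least $\binom{g(i)-1}{r-1}$ (since it contains a $g(i)$-clique). Iteratively deleting minimum-degree vertices gives $|\mathcal{F}|\ge\sum_{i=1}^{sl}\binom{g(i)-1}{r-1}$, and the rest of the proof is a (mildly delicate) estimate showing this sum is at least $\frac{s}{t}\binom{tl}{r}$. This sidesteps Lemma~\ref{lemma:hole} entirely and makes no inductive appeal to the structure of holes.

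Your plan, by contrast, inducts on $l$ by peeling off a hole of size $s$ with clique number \emph{exactly} $t$, and you correctly flag the existence of such a hole as a problem. But your proposed fallback does not close the gap. First, the claim that $\alpha=l$ always works is false: the $(sl,tl)$-property only guarantees that the whole vertex set has clique number \emph{at least} $tl$, not exactly $tl$, so a smallest $\alpha$ with an $\alpha s$-subset of clique number exactly $\alpha t$ need not exist. Second, even where a hole of size $s$ with clique number exactly $t$ does exist, the crucial counting step --- bounding the number of edges incident to the removed layers by $\frac{s}{t}\bigl[\binom{tl}{r}-\binom{t(l-1)}{r}\bigr]$ --- is not actually carried out. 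Your telescoping idea counts, for the $j$-th layer, the internal $t$-clique together with cross-edges to earlier layers, but these cross-edges are not controlled by any single $(q,\lceil tq/s\rceil)$-property: the clique produced by the $(js,tj)$-property on $W_1\cup\dots\cup W_j$ need not meet each $W_i$ in exactly $t$ vertices unless you maintain that the clique number of each $W_i$ is exactly $t$ in $\mathcal{F}$ rather than in the residual hypergraph, and even then the clique alone undercounts the needed edges. Third, you state the base case $l=1$ (showing $|\mathcal{F}|\ge\frac{s}{t}\binom{t}{r}$) remains to be extracted from the $(s-j,t)$-properties, and this is not a trivial gap: that single case already contains the essential arithmetic that the paper handles via the sum $\sum_i\binom{g(i)-1}{r-1}$. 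In short, you have honestly flagged the technical heart as undone, and the undone parts are genuinely the hard part; a different, non-inductive counting argument is what actually makes the lemma go through.
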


\begin{proof}
    Let $g(i):=\lceil ti/s\rceil$ and let $f(i):=\binom{g(i)-1}{r-1}$. We claim that any induced sub-$r$-graph of $\mathcal{F}$ on $i$ vertices must contain a vertex with degree at least $f(i)$. This is because $\mathcal{F}$ has $(i,g(i))$-property. Hence any induced sub-$r$-graph of $\mathcal{F}$ on $i$ vertices contains a clique of size $g(i)$, and hence contains a vertex with degree at least $\binom{g(i)-1}{r-1}=f(i)$. Therefore, 
    $$
    |\mathcal{F}|\ge \sum_{i=1}^{sl}f(i).
    $$
    Suppose $s=at+b$ for some integers $a$ and $0\le b<t$. It is not hard to check that $|g^{-1}(j)|=a$ or $a+1$. Let $\delta(j):=|g^{-1}(j)|-a$. So we have
    $$
    \sum_{i=1}^{sl}f(i)=a\sum_{j=1}^{tl}\binom{j-1}{r-1}+\sum_{j=1}^{tl}\delta(j)\binom{j-1}{r-1}=a\binom{tl}{r}+\sum_{j=1}^{tl}\delta(j)\binom{j-1}{r-1}.
    $$
    Now it suffices to show
    $$
    \sum_{j=1}^{tl}\delta(j)\binom{j-1}{r-1}\ge \frac{b}{t}\binom{tl}{r}=\frac{b}{t}\sum_{j=1}^{tl}\binom{j-1}{r-1}.
    $$
    We prove the following stronger statement.
    \begin{claim}
    For integer $1\le k\le tl$,
    $$
    \sum_{j=tl-k+1}^{tl}\l(\delta(j)-\frac{b}{t}\r)\binom{j-1}{r-1}\ge \sum_{j=tl-k+1}^{tl}\l(\delta(j)-\frac{b}{t}\r)\binom{tl-k}{r-1}.     
    $$
    \end{claim}
    \begin{proof}
        When $k=1$, the inequality is true. Suppose the inequality holds for $1\le k<tl$. Note that for $1\le k\le tl$,
        $$\sum_{j=tl-k+1}^{tl}\delta(j)=\l\lceil\frac{ks}{t}\r\rceil-ka.$$
        We have
        $$
        \begin{aligned}
        \sum_{j=tl-k}^{tl}\l(\delta(j)-\frac{b}{t}\r)\binom{j-1}{r-1}&\ge \sum_{j=tl-k+1}^{tl}\l(\delta(j)-\frac{b}{t}\r)\binom{tl-k}{r-1}+\l(\delta(tl-k)-\frac{b}{t}\r)\binom{tl-k-1}{r-1}\\
        &=\l(\l\lceil\frac{ks}{t}\r\rceil-\frac{ks}{t}\r)\binom{tl-k}{r-1}+\l(\delta(tl-k)-\frac{b}{t}\r)\binom{tl-k-1}{r-1}\\
        &\ge \l(\l\lceil\frac{ks}{t}\r\rceil-\frac{ks}{t}\r)\binom{tl-k-1}{r-1}+\l(\delta(tl-k)-\frac{b}{t}\r)\binom{tl-k-1}{r-1}\\
        &=\sum_{j=tl-k}^{tl}\l(\delta(j)-\frac{b}{t}\r)\binom{tl-k-1}{r-1}.
        \end{aligned}
        $$
        This completes the proof of the claim.
    \end{proof}
    The lemma follows immediately by putting $k=tl$ in the claim.
    
\end{proof}

We generalize the notion of {\em excess} from~\cite{frankl2021local}. For a pair $(q,p)$ with $tq\le sp$, we call $e(q,p)=sp-tq$ the excess of the pair $(q,p)$. Note that since $q\ge p$, we always have $e(q,p)\le (s-t)q$. In the following proof, a $(w,v)$-hole always has $sv<tw$ and $v\ge r-1$.

\begin{proof}[Proof of Theorem~\ref{theorem:general_lower_bounds}]
    Given $\epsilon>0$, fix $l$ such that
    $$
    \frac{{s}\binom{tl}{r}}{t\binom{sl}{r}}>(1-\epsilon/2)\l(\frac{t}{s}\r)^{r-1}.
    $$
    Then fix a much larger integer $L\ge3t^2l^2$. Let $\cH_0$ be an $r$-graph on $n$ vertices with $(sL,tL)$-property, where $n$ is sufficiently large. Our goal is to apply Lemma~\ref{lemma:dense} on a subset $X\subset [n]$ with $\binom{|X|}{r}\ge(1-\epsilon/2)\binom{n}{r}$. This requires us to find such an $X$ such that $\binom{X}{r}\cap \cH_0$ has no $(w,v)$-hole with $w\le sl$ and $v<tw/s$.

    We start with $\cH_0$ and define $\cH_i$ inductively. Let $q_0=sL$ and $p_0=tL$, $X_0=[n]$. Suppose $q_i\ge 2sl$ and $\cH_i=\binom{X_i}{r}\cap \cH_0$ has a $(w,v)$-hole $Y$ such that $w\le sl$. Let $q_{i+1}=q_i-w$, $p_{i+1}=p_i-v$, and let $X_{i+1}=X_i\setminus{Y}$, $\cH_{i+1}=\binom{X_{i+1}}{r}\cap\cH_0$.
    Note that $e(q_{i+1},p_{i+1})=s(p_i-v)-t(q_i-w)\ge e(q_i,p_i)+1$. This implies that $i\le e(q_i,p_i)\le (s-t)q_i$.
    
    The process keeps going unless, at some step $i$, $q_i<2sl$ or $\cH_i$ contains no $(w,v)$-holes with $w\le sl$. 

    If, at step $i$, $q_i<2sl$, then we have
    $sL=q_0\le q_i+isl\le q_i+(s-t)q_isl<3s^3l$. This contradicts $L\ge 3s^2l^2$.

    Hence, at some step $i$, we obtain $\cH_i$ containing no $(w,v)$-holes with $w\le sl$ such that $|X_i|\ge n-isl\ge n-(s-t)q_0sl$. For $n$ sufficiently large, 
    we have $\binom{|X_i|}{r}\ge(1-\epsilon/2)\binom{n}{r}$. By Lemma~\ref{lemma:dense}, 
    $$
    \frac{|\cH_0|}{\binom{n}{r}}\ge (1-\epsilon/2)\frac{|\cH_i|}{\binom{|X_i|}{r}}\ge (1-\epsilon/2)\frac{s\binom{tl}{r}}{t\binom{sl}{r}}\ge (1-\epsilon/2)^2\l(\frac{t}{s}\r)^{r-1}.
    $$
    Let $n\to\infty$ and then let $\epsilon\to0$. This completes the proof.
\end{proof}

\section{Some better lower bounds}\label{section:better}
In this section, we prove Theorem~\ref{theorem:better_lower_bound}. We make use of the following lemma:
\begin{lem}\label{lemma:recursive_lower_bound}
    For any integers $r\ge2$, $q_1\ge p_1\ge r$, $q_2\ge p_2\ge r$,
    \begin{equation*}
        t_r(q_1+q_2, p_1+p_2-1)\ge\min\{t_r(q_1,p_1),~t_r(q_2,p_2)\}
    \end{equation*}
\end{lem}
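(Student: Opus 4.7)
The plan is to prove the finite-$n$ analogue
\begin{equation*}
T_r(n, q_1+q_2, p_1+p_2-1) \ge \min\{T_r(n-q_2, q_1, p_1),\ T_r(n, q_2, p_2)\}
\end{equation*}
and then divide by $\binom{n}{r}$ and let $n\to\infty$, using $\binom{n-q_2}{r}/\binom{n}{r}\to 1$ and the definition of $t_r$ to recover the claim.

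For the finite inequality, I would take an $n$-vertex $r$-graph $\cH$ with $(q_1+q_2, p_1+p_2-1)$-property and split into two cases according to whether or not $\cH$ itself has $(q_2,p_2)$-property. In the first case $|\cH|\ge T_r(n,q_2,p_2)$ is immediate. In the second case there is some $A_2\in\binom{V(\cH)}{q_2}$ whose clique number $v$ satisfies $v\le p_2-1$, so $A_2$ is a $(q_2,v)$-hole. I would then invoke Lemma~\ref{lemma:hole} with $q=q_1+q_2$, $p=p_1+p_2-1$ and $w=q_2$; its hypothesis $w\le q-r$ reduces to $q_1\ge r$, which is part of the data. The conclusion is that the induced sub-$r$-graph $\cH\cap\binom{V(\cH)\setminus A_2}{r}$ has $(q_1, p_1+p_2-1-v)$-property. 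Since $v\le p_2-1$, the required clique size $p_1+p_2-1-v$ is at least $p_1$, so this induced $r$-graph in particular has $(q_1,p_1)$-property, yielding $|\cH|\ge T_r(n-q_2, q_1,p_1)$.

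There is no real obstacle: the argument is a one-shot reduction via Lemma~\ref{lemma:hole}. The only bookkeeping is to check the remaining side conditions of Lemma~\ref{lemma:hole} ($q_1+q_2>p_1+p_2-1$ and $p_1+p_2-1\ge r$, both immediate from $q_i\ge p_i\ge r$) and to note the trivial monotonicity that $(q,p)$-property implies $(q,p')$-property for every $r\le p'\le p$. The limit passage is routine since $q_1,q_2$ are fixed while $n\to\infty$.
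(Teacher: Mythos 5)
Your proposal is correct and takes essentially the same route as the paper: in both arguments one case-splits on whether the hypergraph already has one of the two smaller $(q_i,p_i)$-properties, and in the failing case one produces a hole and applies Lemma~\ref{lemma:hole} plus monotonicity of the $(q,p)$-property in $p$. The only differences are cosmetic: you split on $(q_2,p_2)$ while the paper splits on $(q_1,p_1)$, and you package the limit passage via a finite-$n$ inequality in $T_r$ instead of the paper's $\epsilon$-argument.
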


\begin{proof}
    Let $q_0=q_1+q_2$, $p_0=p_1+p_2-1$ and let $x=\min\{t_r(q_1,p_1),~t_r(q_2,p_2)\}$. For any $\epsilon>0$, let $n_0$ be sufficiently large such that for any $n\ge n_0$,
    $$
    \min\{t_r(n,q_1,p_1),t_r(n,q_2,p_2)\}\ge (1-\epsilon)x,
    $$
    and
    $$
    \binom{n-q_1}{r}\ge (1-\epsilon)\binom{n}{r}.
    $$
    For any $n\ge n_0+q_1$, let $G$ be an $r$-graph on $n$ vertices with $(q_0,p_0)$-property. If $G$ has $(q_1,p_1)$-property, then we have
    $$
    \frac{|\G|}{\binom{n}{r}}\ge t_r(n,q_1,p_1)\ge (1-\epsilon)x;
    $$
    
    Otherwise, $\G$ has a $(q_1,u)$-hole $Y$ where $u\le p_1-1$. Let $\G'=\binom{V(\G)\setminus Y}{r}\cap G$. By Lemma~\ref{lemma:hole}, $\G'$ has $(q_2,p_2)$-property. Therefore,
    $$
    \frac{|\G|}{\binom{n}{r}}\ge (1-\epsilon)\frac{|\G'|}{\binom{n-q_1}{r}}\ge (1-\epsilon)t_r(n-q_1, q_2, p_2)\ge(1-\epsilon)^2x.
    $$
    Let $n\to\infty$ and let $\epsilon\to 0$, we infer $t_r(q_0,p_0)\ge x$.
\end{proof}

\begin{lem}\label{lemma:degenerate}
    For any integers $q\ge p\ge r\ge 2$, $k\ge 0$,
    \begin{equation*}
        t_r(kq+q,k(p-1)+p)\ge t_r(q,p).
    \end{equation*}
\end{lem}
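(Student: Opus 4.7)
The plan is to proceed by induction on $k$, repeatedly applying Lemma~\ref{lemma:recursive_lower_bound}. The base case $k=0$ is trivial since the inequality reduces to $t_r(q,p) \ge t_r(q,p)$.

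For the inductive step, suppose $t_r(kq+q, k(p-1)+p) \ge t_r(q,p)$ and we want to establish the same inequality with $k$ replaced by $k+1$, i.e., show
\[
t_r\bigl((k+2)q,\ (k+1)(p-1)+p\bigr) \ge t_r(q,p).
\]
I would apply Lemma~\ref{lemma:recursive_lower_bound} with $(q_1,p_1) = (kq+q,\, k(p-1)+p)$ and $(q_2,p_2) = (q,p)$. The arithmetic check is then the main bookkeeping step: $q_1 + q_2 = (k+2)q$ as required, and
\[
p_1 + p_2 - 1 = k(p-1) + p + p - 1 = (k+1)(p-1) + p,
\]
which matches the second coordinate in the target. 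Lemma~\ref{lemma:recursive_lower_bound} then yields
\[
t_r\bigl((k+2)q,\ (k+1)(p-1)+p\bigr) \ge \min\{t_r(kq+q, k(p-1)+p),\ t_r(q,p)\},
\]
and by the inductive hypothesis the first term is already $\ge t_r(q,p)$, so the minimum equals $t_r(q,p)$, closing the induction.

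There is no real obstacle here beyond verifying the identity $p_1 + p_2 - 1 = (k+1)(p-1) + p$; the lemma is essentially a clean iteration of the gluing bound from Lemma~\ref{lemma:recursive_lower_bound}. The intuition is that the $(kq+q, k(p-1)+p)$-property should propagate through a disjoint-union-like composition with a $(q,p)$-structure, with each application of the lemma costing exactly one unit in the ``overlap'' of the $p$-coordinate — which is precisely what the increments $q_1 \mapsto q_1+q$ and $p_1 \mapsto p_1 + p - 1$ encode.
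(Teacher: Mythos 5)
Your proof is correct and matches the paper's approach exactly: induction on $k$ with the base case $k=0$ trivial, and the inductive step is a single application of Lemma~\ref{lemma:recursive_lower_bound} with $(q_1,p_1)$ the larger pair and $(q_2,p_2)=(q,p)$. The only cosmetic difference is that you index the step from $k$ to $k+1$ while the paper descends from $k$ to $k-1$.
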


\begin{proof}
    We use induction on $k$. When $k=0$, equality holds. When $k\ge 1$, by Lemma~\ref{lemma:recursive_lower_bound} and the inductive hypothesis
    $$
    t_r(kq+q,k(p-1)+p)\ge \min\{t_r((k-1)q+q,(k-1)(p-1)+p),t_r(q,p)\}\ge t_r(q,p).
    $$
\end{proof}

\begin{proof}[Proof of Theorem~\ref{theorem:better_lower_bound}]
    Let $p=\ceil{q/\gamma}$, then
    $$
    \lim_{k\to\infty}\frac{kq+q}{k(p-1)+p}=\frac{q}{p-1}>\gamma.
    $$
    Hence there exists $k_0$ such that, for any $k\ge k_0$, $(kq+q)/\gamma\ge k(p-1)+p$. Therefore, by Lemma~\ref{lemma:degenerate}
    $$
    \t_r(\gamma)= \lim_{k\to\infty}t_r(kq+q,\ceil{(kq+q)/\gamma})\ge t_r(kq+q,k(p-1)+p)\ge t_r(q,p).
    $$

    Now suppose that $\t_r(\alpha)=t_r(q,\ceil{q/\alpha})$. For any $\alpha\le\gamma<\frac{q}{\ceil{q/\alpha}-1}$, we have $\ceil{q/\alpha}-1< \frac{q}{\gamma}\le\frac{q}{\alpha}$, hence $\ceil{q/\gamma}=\ceil{q/\alpha}$,  $\t_r(\gamma)\ge t_r(q,\ceil{q/\gamma})=t_r(q,\ceil{q/\alpha})=\t_r(\alpha)$. On the other hand, since the function $\t_r$ is non-increasing, we have $\t_r(\gamma)\le\t_r(\alpha)$. Therefore, $\t_r(\gamma)=t_r(q,p)$.
\end{proof}

Corollary~\ref{corollary:7/3} and Corollary~\ref{corollary:turan} follow easily from Theorem~\ref{theorem:better_lower_bound}.

\section{Constructions for upper bounds}\label{section:tight}
In this section, we describe a framework that generates constructions for upper bounds of $\t_r(\gamma)$ and then use it to prove Theorem~\ref{theorem:tightcycle_construction}. We will also briefly introduce the constructions used by Fang, Gao, Ma and Song~\cite{Fang2023Onlocal}.

\begin{definition}\label{definition:blowup}
    Fix a hypergraph $\cH$ on $[t]=\{1,\dots,t\}$. Let $\F(\cH,r)$ be a family of $r$-graphs $\G$ with the following properties: 
    vertices of $\G$ can be partitioned into $t$ disjoint sets $V_1,\dots,V_t$ such that 
    \begin{equation}\label{equation:blowup}
        \G=\cup_{e\in \cH}\binom{\cup_{i\in e}V_i}{r}.
    \end{equation}
\end{definition}

For example, a 3-graph $\G\in\F(C_3,3)$ consists of all triples $e$ such that $|e\cap V_i|\ge 2$ for some $i\in[3]$. Let
\begin{equation*}
    \rho(\cH,r,n)=\min\l\{\frac{|\G|}{\binom{n}{r}}:~\G\in\F(\cH,r),~|V(\G)|=n\r\},
\end{equation*}
and let
\begin{equation*}
    \rho(\cH,r)=\lim_{n\rightarrow\infty}\rho(\cH,r,n).
\end{equation*}

By an averaging argument, we know that, when $\cH$ and $r$ are fixed, $\rho(\cH,r,n)$ is non-decreasing. Hence, the limit $\rho(\cH,r)$ exists. 

Next, we introduce two parameters of a hypergraph $\cH$ that will make it more convenient to describe properties of the family $\F(\cH,r)$. For every integer $t\ge1$, let 
\begin{equation*}
\Delta_{t-1}=\{(x_1,\dots,x_t)|\forall 1\le i\le t,~x_i\ge0,~x_1+\dots+x_t=1\}.
\end{equation*} 
Given a hypergraph $\cH$ on $[t]$, we define a function $f_{\cH}$ on $\Delta_{t-1}$, 
\begin{equation*}
    f_{\cH}(x_1,\dots,x_t)=\max_{e\in\cH}\sum_{i\in e}x_i.
\end{equation*}

\begin{definition}
    Given a hypergraph $\cH$ on $[t]$, let
    \begin{equation*}
        f(\cH)=\min_{x\in \Delta_{t-1}} f_{\cH}(x).
    \end{equation*}
\end{definition}

Since $f_\cH$ is continuous and $\Delta_{t-1}$ is compact, $f(\cH)$ is well-defined. The intuition is, given a vertex set $A$ in a hypergraph $\cH\in \F(\cH,r)$, we can let $(x_1,\dots,x_t)$ be the vertex distribution of $A$ among different parts of $\cH$. Now if $f_\cH$ is large, then no matter how the vertices in $A$ are distributed, we can always find a clique in $A$ of size $f_\cH |A|$. The next lemma reveals the relation between $f_\cH$ and the local Tur\'an property of a hypergraph in $\rho(\cH,r,n)$ in a more formal way.

\begin{lem}\label{lemma:f}
    Let $\cH$ be a hypergraph on $[t]$ and let $\gamma=1/f(\cH)$. Then $\forall q\ge r\ge 2$ and $\forall \G\in\F(\cH,r)$, $\G$ has $(q,\ceil{q/\gamma})$-property. Therefore,
    $$
    \t_r(\gamma)\le \rho(\cH,r).
    $$
\end{lem}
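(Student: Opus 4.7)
The plan is to verify the $(q,\ceil{q/\gamma})$-property by a direct profile argument on the partition, and then take the Tur\'an-type conclusion by passing to limits. Fix an arbitrary $\G \in \F(\cH,r)$ with its defining partition $V(\G) = V_1 \cup \cdots \cup V_t$, and let $A \subseteq V(\G)$ be any set of size $q$. The first step is to record the profile $a_i := |A \cap V_i|$ and normalize to obtain the probability vector $(x_1,\ldots,x_t) := (a_1/q,\ldots,a_t/q) \in \Delta_{t-1}$.

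The heart of the argument is to feed this profile into the definition of $f(\cH)$. By definition $f_\cH(x_1,\ldots,x_t) \ge f(\cH) = 1/\gamma$, so the definition of $f_\cH$ produces an edge $e \in \cH$ with $\sum_{i \in e} x_i \ge 1/\gamma$, i.e.\ $\sum_{i \in e} a_i \ge q/\gamma$. Setting $B := A \cap \bigcup_{i \in e} V_i$, its size $|B| = \sum_{i \in e} a_i$ is an integer satisfying $|B| \ge \ceil{q/\gamma}$. By (\ref{equation:blowup}), every $r$-subset of $\bigcup_{i \in e} V_i$ is an edge of $\G$, so $\binom{B}{r} \subseteq \G$ and $B$ contains a complete sub-hypergraph on $\ceil{q/\gamma}$ vertices; this establishes the $(q,\ceil{q/\gamma})$-property.

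For the density conclusion, choosing for each $n$ a $\G \in \F(\cH,r)$ on $n$ vertices realizing the minimum in the definition of $\rho(\cH,r,n)$ yields $t_r(n,q,\ceil{q/\gamma}) \le \rho(\cH,r,n)$ for every $q \ge r$. Letting $n \to \infty$ with $q$ fixed gives $t_r(q,\ceil{q/\gamma}) \le \rho(\cH,r)$, and then letting $q \to \infty$ gives $\t_r(\gamma) \le \rho(\cH,r)$.

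I do not foresee a real obstacle: the lemma is essentially a direct unpacking of the definitions once one views the partition profile as a point in $\Delta_{t-1}$ and notes that $\G$ contains all $r$-subsets of $\bigcup_{i \in e} V_i$ simultaneously. The only mildly subtle point is that $|B|$ is automatically an integer, so the real inequality $|B| \ge q/\gamma$ lifts to $|B| \ge \ceil{q/\gamma}$ without any further loss.
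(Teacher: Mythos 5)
Your proof is correct and follows essentially the same route as the paper's: normalize the intersection profile $(a_1/q,\ldots,a_t/q)$ to a point of $\Delta_{t-1}$, invoke $f_\cH \ge f(\cH) = 1/\gamma$ to extract an edge $e\in\cH$ with $\sum_{i\in e}a_i \ge q/\gamma$, lift to $\ceil{q/\gamma}$ by integrality, and note that $\bigcup_{i\in e}V_i$ spans a clique of $\G$ by (\ref{equation:blowup}). The passage to $\t_r(\gamma)\le\rho(\cH,r)$ by taking $n\to\infty$ then $q\to\infty$ is likewise the same limiting argument the paper uses.
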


\begin{proof}
    Let $V_1,\dots,V_t$ be disjoint sets of vertices of $\G$ satisfying (\ref{equation:blowup}). Let $W$ be a set of $q$ vertices of $\G$. Let $a_i=|W\cap V_i|$, $\forall 1\le i\le t$. Clearly, $(a_1/q,\dots,a_t/q)$ is a vector in $\Delta_{t-1}$. Hence by definition, there exists an edge $e\in \cH$ such that
    $$
    \frac{\sum_{i\in e}a_i}{q}=f_{\cH}(\frac{a_1}{q},\dots,\frac{a_t}{q})\ge f(\cH) \ge\frac{1}{\gamma}.
    $$
    Since $\sum_{i\in e}a_i$ is an integer, we have $\sum_{i\in e}a_i\ge\ceil{q/\gamma}$. By equation (\ref{equation:blowup}), $W\cap(\cup_{i\in e}V_i)$ spans a clique of size $\sum_{i\in e}a_i\ge\ceil{q/\gamma}$ in $\G$. Therefore, $\G$ has $(q,\ceil{q/\gamma})$-property.

    As a result, we have $t_r(q,\ceil{q/\gamma},n)\le \rho(\cH,r,n)$. Letting $n\to\infty$ and $q\to\infty$ gives $\t_r(\gamma)\le\rho(\cH,r)$.
\end{proof}

A hypergraph $\cH$ is said to be the disjoint union of $k$ hypergraphs $\cH_1,\dots,\cH_k$ where $k\ge2$ if $\cH_1,\dots,\cH_k$ have disjoint vertex sets and $\cH=\cH_1\cup\dots\cup\cH_k$.
\begin{lem}\label{lemma:f_union}
    Let $k\ge2$ and let $\cH$ be the disjoint union of $k$ hypergraphs $\cH_1,
    \dots,\cH_k$, then
    \begin{equation*}
        f(\cH)=\l(\sum_{i=1}^k f(\cH_k)^{-1}\r)^{-1}
    \end{equation*}
\end{lem}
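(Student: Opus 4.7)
The plan is to reduce the min-max defining $f(\cH)$ to an optimization over the total mass assigned to each part $V(\cH_i)$, and then solve that one-dimensional (in $k$ variables) problem by equalizing the contributions of the different components.

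First I would set up notation: write $V_i=V(\cH_i)$, $t_i=|V_i|$, and $t=t_1+\dots+t_k$, so that a point $x\in\Delta_{t-1}$ splits uniquely as $x=(x^{(1)},\dots,x^{(k)})$ with $x^{(i)}\in\mathbb{R}_{\ge 0}^{V_i}$. Let $s_i=\sum_{j\in V_i}x_j$, so $(s_1,\dots,s_k)\in\Delta_{k-1}$. Since every edge $e\in\cH$ lies entirely inside exactly one $V_i$, we have
$$f_{\cH}(x)=\max_{1\le i\le k}\ \max_{e\in\cH_i}\sum_{j\in e}x_j=\max_{1\le i\le k} s_i\, f_{\cH_i}\!\left(x^{(i)}/s_i\right),$$
where for $s_i=0$ the $i$-th term is interpreted as $0$. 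Minimizing first over the $x^{(i)}/s_i\in\Delta_{t_i-1}$ with the $s_i$ fixed, each inner minimum equals $f(\cH_i)$ by definition, so
$$f(\cH)=\min_{(s_1,\dots,s_k)\in\Delta_{k-1}}\ \max_{1\le i\le k} s_i\, f(\cH_i).$$

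Second, I would solve this min-max. Assuming all $f(\cH_i)>0$ (the degenerate case where some $f(\cH_i)=0$ is handled separately, by putting all mass on that component), the minimax is attained by equalizing the $k$ quantities: setting $s_i f(\cH_i)=c$ for all $i$ yields $s_i=c/f(\cH_i)$, and the constraint $\sum s_i=1$ forces $c=\bigl(\sum_i f(\cH_i)^{-1}\bigr)^{-1}$. To justify that this choice is optimal, I would note that for any other $(s_i)$ some coordinate must satisfy $s_j\ge c/f(\cH_j)$ (else $\sum s_i<1$), giving $s_j f(\cH_j)\ge c$; hence the max is at least $c$, and the equalizing choice attains it.

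The only real subtlety is the reduction step verifying the identity for $f_{\cH}(x)$, since one must be careful that a separate maximization over $i$ and over $e\in\cH_i$ really gives the same value as the joint maximization over $e\in\cH=\cH_1\sqcup\dots\sqcup\cH_k$, and that the normalization $x^{(i)}/s_i$ is valid only when $s_i>0$. Once that is in place, the rest is a standard weighted-minimax calculation, so I expect no other obstacle.
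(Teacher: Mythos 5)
Your proof is correct and takes essentially the same approach as the paper: reduce to minimizing the weighted maximum over the mass vector $(s_1,\dots,s_k)$ and then equalize the terms $s_i f(\cH_i)$. The only presentational difference is that you handle general $k$ directly while the paper reduces to $k=2$ and appeals to iteration; your treatment of the $s_i=0$ edge case and the joint-versus-separate maximization is also spelled out more carefully.
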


\begin{proof}
    It suffices to show this for $k=2$. When $k=2$, by definition, 
    \begin{equation*}
        f(\cH)=\min_{0\le c\le 1}\{\max\{cf(\cH_1),(1-c)f(\cH_2)\}\}.
    \end{equation*}
    Clearly, the minimum is obtained when $cf(\cH_1)=(1-c)f(\cH_2)$, which implies
    \begin{equation*}
        c=\frac{f(\cH_2)}{f(\cH_1)+f(\cH_2)}.
    \end{equation*}
    Hence
    \begin{equation*}
        f(\cH)=\l(f(\cH_1)^{-1}+f(\cH_2)^{-1}\r)^{-1}.
    \end{equation*}
\end{proof}

Given a hypergraph $\cH$ on $[t]$ and $r\ge2$, we define a function $g_{\cH,r}$ on $\Delta_{t-1}$,
\begin{equation*}
    g_{\cH,r}(x_1,\dots,x_t)=\sum_{(y_1,\dots,y_r)\in [t]^r}\prod^r_{i=1}x_{y_i}\sigma_{\cH}(y_1,\dots,y_r),
\end{equation*}
where $\sigma_{\cH}(y_1,\dots,y_r)=1$ if there exists $e\in\cH$ such that $y_i\in e,~\forall 1\le i\le r$; otherwise, $\sigma(y_1,\dots,y_r)=0$.

\begin{definition}
    Given a hypergraph $\cH$ on $[t]$ and an integer $r\ge2$, let
    \begin{equation*}
        g(\cH,r)=\min_{x\in \Delta_{t-1}} g_{\cH,r}(x).
    \end{equation*}
\end{definition}

Since $g_{\cH,r}$ is continuous and $\Delta_{t-1}$ is compact, $g(\cH,r)$ is well-defined.

\begin{lem}\label{lemma:g_union}
    Let $r,k\ge2$ and let $\cH$ be the disjoint union of $k$ hypergraphs $\cH_1, \dots,\cH_k$, then
    \begin{equation*}
        g(\cH,r)=\l(\sum_{i=1}^k g(\cH_k,r)^{-\frac{1}{r-1}}\r)^{-r+1}.
    \end{equation*}
\end{lem}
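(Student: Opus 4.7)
The plan is to factor $g_{\cH,r}$ across the vertex-disjoint components and reduce to a one-variable Hölder problem. Write $V(\cH_j)\subset [t]$ for the vertex set of $\cH_j$. By disjointness, $\sigma_{\cH}(y_1,\dots,y_r)$ equals $\sigma_{\cH_j}(y_1,\dots,y_r)$ whenever every $y_i$ lies in a common $V(\cH_j)$, and vanishes otherwise. For $x\in\Delta_{t-1}$, set $c_j=\sum_{i\in V(\cH_j)}x_i$ and, when $c_j>0$, let $z^{(j)}\in\Delta_{|V(\cH_j)|-1}$ be the normalized restriction $(x_i/c_j)_{i\in V(\cH_j)}$. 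Splitting the sum defining $g_{\cH,r}(x)$ according to which component an $r$-tuple lies in yields the key identity
$$g_{\cH,r}(x)=\sum_{j=1}^k c_j^r\, g_{\cH_j,r}(z^{(j)}).$$

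With this in hand I would first minimize inside each component: for fixed $(c_1,\dots,c_k)\in\Delta_{k-1}$, the vectors $z^{(j)}$ vary independently over their simplices, so the infimum of $g_{\cH,r}(x)$ subject to the chosen marginals is $\sum_j c_j^r a_j$, where $a_j:=g(\cH_j,r)$. The remaining task is to evaluate $\min_{c\in\Delta_{k-1}}\sum_j c_j^r a_j$. Assume first that every $a_j>0$. Applying Hölder's inequality with exponents $r$ and $r/(r-1)$ to the factorization $c_j=(c_j a_j^{1/r})\cdot a_j^{-1/r}$ gives
$$1=\sum_{j=1}^k c_j\le\l(\sum_{j=1}^k c_j^r a_j\r)^{1/r}\l(\sum_{j=1}^k a_j^{-1/(r-1)}\r)^{(r-1)/r},$$
which rearranges to $\sum_j c_j^r a_j\ge\bigl(\sum_j a_j^{-1/(r-1)}\bigr)^{-(r-1)}$. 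Equality in Hölder forces $c_j\propto a_j^{-1/(r-1)}$, and this specific choice lies in $\Delta_{k-1}$, so the bound is attained, giving the stated formula.

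In the degenerate case where some $a_j=0$, placing all mass on that component ($c_j=1$) shows $g(\cH,r)\le 0$, hence $g(\cH,r)=0$, matching the formula under the convention $0^{-1/(r-1)}=\infty$. I expect the main obstacle to be purely organizational: writing down the decomposition identity cleanly and checking that coordinates with $c_j=0$ cause no trouble in the inner minimization. Conceptually this is the $r$-uniform analogue of Lemma~\ref{lemma:f_union}; the new feature is the weight $c_j^r$ (rather than $c_j$) coming from the $r$-fold product in the definition of $g_{\cH,r}$, and this weighting is exactly what produces the exponent $-1/(r-1)$ in the final formula.
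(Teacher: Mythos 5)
Your proof is correct, and the underlying strategy is the same as the paper's: factor $g_{\cH,r}$ across the disjoint components, observe that the inner minimization decouples, and then solve the resulting weighted $r$-th-power minimization over the simplex. The paper, however, first reduces to $k=2$ and proves the two-component case with Jensen's inequality applied to $x\mapsto x^r$ (then the general case follows by induction via associativity of the formula), whereas you treat all $k$ components simultaneously with a single application of H\"older's inequality with exponents $r$ and $r/(r-1)$. The two inequalities are essentially interchangeable here and give the same equality condition $c_j\propto a_j^{-1/(r-1)}$; your route avoids the induction and also explicitly addresses the degenerate case $a_j=0$, which the paper silently elides (it never arises for the tight-cycle building blocks used later). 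One other small difference: the paper compresses the decomposition identity into a single ``by definition'' step, while you spell it out in terms of the marginals $c_j$ and normalized restrictions $z^{(j)}$ — this is worth doing, since that identity is really the content of the reduction.
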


\begin{proof}
    Again, it is sufficient to show the statement for $k=2$. When $k=2$, by definition, 
    \begin{equation*}
        g(\cH,r)=\min_{0\le c\le 1}\{c^rg(\cH_1,r)+(1-c)^rg(\cH_2,r)\}.
    \end{equation*}
    Let $g(\cH_1,r)=g_1$, $g(\cH_2,r)=g_2$ and let $b=(g_1/g_2)^{\frac{1}{r-1}}$. Applying Jensen's inequality to the convex function $x^r$,
    \begin{equation*}
    \begin{aligned}
        c^rg_1+(1-c)^rg_2&=c^rg_1+\l(\frac{1-c}{b}\r)^rb^rg_2\\
        &\ge (g_1+b^rg_2)\l(\frac{cg_1+(1-c)b^{r-1}g_2}{g_1+b^rg_2}\r)^r\\
        &=(g_1^{-\frac{1}{r-1}}+g_2^{-\frac{1}{r-1}})^{-r+1}
    \end{aligned}
    \end{equation*}
    Equality holds if and only if $c=(1-c)/b$ which is equivalent to $c=1/(1+b)$. Hence this minimum is obtainable.
    Therefore,
    \begin{equation*}
        g(\cH,r)=\l(g(\cH_1,r)^{-\frac{1}{r-1}}+g(\cH_2,r)^{-\frac{1}{r-1}}\r)^{-r+1}.
    \end{equation*}
\end{proof}

\begin{lem}\label{lemma:g}
    Let $\cH$ be a hypergraph on $[t]$ and $r\ge2$. Then
    \begin{equation*}
        g(\cH,r)=\rho(\cH,r).
    \end{equation*}
\end{lem}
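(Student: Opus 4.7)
The plan is to prove both inequalities $\rho(\cH,r)\le g(\cH,r)$ and $\rho(\cH,r)\ge g(\cH,r)$ by a direct counting argument that identifies the edge density of a blow-up with $g_{\cH,r}$ evaluated at the normalized vector of part sizes. The central observation is that an $r$-subset $S=\{v_1,\dots,v_r\}$ of $V(\G)$ belongs to $\G\in\F(\cH,r)$ if and only if the type $(y_1,\dots,y_r)\in[t]^r$ defined by $v_j\in V_{y_j}$ satisfies $\sigma_\cH(y_1,\dots,y_r)=1$; this is immediate from the defining equation $\G=\cup_{e\in\cH}\binom{\cup_{i\in e}V_i}{r}$.

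Using this, for $\G\in\F(\cH,r)$ with parts of sizes $n_1,\dots,n_t$ summing to $n$, I would count edges by counting ordered $r$-tuples of distinct vertices and dividing by $r!$, obtaining
$$
|\G|=\frac{1}{r!}\sum_{(y_1,\dots,y_r)\in[t]^r}\sigma_\cH(y_1,\dots,y_r)\prod_{i=1}^t n_i^{\underline{k_i}},
$$
where $k_i=|\{j:y_j=i\}|$ and $n_i^{\underline{k_i}}$ is the falling factorial. Since $n_i^{\underline{k_i}}=n_i^{k_i}(1+O(1/n))$ and $\binom{n}{r}=n^r/r!\cdot(1+O(1/n))$, setting $x_i=n_i/n$ yields the key estimate
$$
\frac{|\G|}{\binom{n}{r}}=g_{\cH,r}(x_1,\dots,x_t)+O(1/n),
$$
with the implicit constant depending only on $\cH$ and $r$.

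For the upper bound, I would pick $x^*\in\Delta_{t-1}$ attaining $g(\cH,r)=g_{\cH,r}(x^*)$, set $n_i=\lfloor x_i^* n\rfloor$ (putting leftover vertices into an arbitrary part), and let $\G^*$ be the corresponding blow-up in $\F(\cH,r)$; the estimate above gives $\rho(\cH,r,n)\le g(\cH,r)+O(1/n)$, hence $\rho(\cH,r)\le g(\cH,r)$ in the limit. For the lower bound, any $\G\in\F(\cH,r)$ on $n$ vertices has part-size vector lying in $\Delta_{t-1}$, so the estimate combined with $g_{\cH,r}\ge g(\cH,r)$ on $\Delta_{t-1}$ gives $|\G|/\binom{n}{r}\ge g(\cH,r)+O(1/n)$; taking the minimum over $\G$ and then $n\to\infty$ yields $\rho(\cH,r)\ge g(\cH,r)$.

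There is no genuine obstacle here: the argument is purely a careful bookkeeping of how many $r$-subsets of a $t$-partite vertex set have type lying within some edge of $\cH$. The only thing to be careful about is replacing falling factorials by pure powers uniformly in the type $(y_1,\dots,y_r)$, so that the $O(1/n)$ error is independent of $x$; once that is done, both directions follow by applying the estimate to the minimizer of $g_{\cH,r}$ on one side and to an arbitrary member of $\F(\cH,r)$ on the other.
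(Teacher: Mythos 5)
Your argument is correct and follows essentially the same route as the paper: both proofs count edges of a blow-up $\G\in\F(\cH,r)$ by summing over ordered types $y\in[t]^r$ with $\sigma_\cH(y)=1$, approximate the resulting sum of products of falling factorials by $g_{\cH,r}$ evaluated at the normalized part-size vector with a uniform $O(1/n)$ error, and obtain the two inequalities by applying this estimate to an arbitrary $\G$ (for $\rho\ge g$) and to a blow-up whose part sizes track a minimizer of $g_{\cH,r}$ (for $\rho\le g$). The paper phrases the error multiplicatively as a factor $1-c/n$ and takes a sequence $\G_n$ with $|V_i(\G_n)|/n\to x_i^*$ rather than your explicit $\lfloor x_i^* n\rfloor$ choice, but these are cosmetic differences.
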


\begin{proof}
    For any $\G\in\F(\cH,r)$, let $V_1,\dots,V_t$ be disjoint set of vertices of $\G$ satisfying (\ref{equation:blowup}). Given $y=(y_1,\dots,y_r)\in[t]^r$, $\forall 1\le i\le t$, let $m_y(i)$ denote the number of entries equal to $i$ in $y$. By (\ref{equation:blowup}), we can count the number of edges in $\G$ as following: for each $y=(y_1,\dots,y_r)\in [t]^r$, if $\sigma_\cH(y)=1$, then the number of (ordered) edges $e=(v_1,\dots,v_r)$ such that $v_i\in V_{y_i}$ for all $1\le i\le r$ is exactly
    $$
    \prod_{i=1}^t\binom{|V_i|}{m_y(i)}{m_y(i)\,!}.
    $$
    If we sum the quantity above over all $y\in[t]^r$, then each edge in $\G$ is counted exactly $r\,!$ times. Hence,
    \begin{equation*}
        |\G|=\sum_{y\in [t]^r}\sigma_\cH(y)\prod_{i=1}^t\binom{|V_i|}{m_y(i)}\frac{m_y(i)\,!}{r\,!}
        \ge\l(1-\frac{c_1}{v(\G)}\r)g_{\cH,r}\l(\frac{|V_1|}{v(\G)},\dots,\frac{|V_t|}{v(\G)}\r)\binom{v(\G)}{r}
    \end{equation*}
    where $c_1$ is a constant that depends only on $r$.
    This implies that
    \begin{equation*}
        \rho(\cH,r,n)\ge\l(1-\frac{c_1}{n}\r) g_{\cH,r}\l(\frac{|V_1|}{v(\G)},\dots,\frac{|V_t|}{v(\G)}\r)\ge\l(1-\frac{c_1}{n}\r) g(\cH,r).
    \end{equation*}
    Letting $n\to\infty$ gives $\rho(\cH,r)\ge g(\cH,r)$.

    On the other hand, let $x=(x_1,\dots,x_t)\in \Delta_{t-1}$ be the vector such that $g_{\cH,r}(x)=g(\cH,r)$. We can pick a sequence of $r$-graphs $\{\G_n\}_{n\ge 1}\subset\F(\cH,r)$ such that $v(\G_n)=n$ and $|V_i(\G_n)|/n\to x_i$ as $n\to\infty$, $\forall 1\le i\le t$. By definition, there exists a constant $c_2$ that depends only on $r$ such that
    \begin{equation*}
        g_{\cH,r}\l(\frac{|V_1(\G_n)|}{n},\dots,\frac{|V_t(\G_n|}{n}\r)\ge\l(1-\frac{c_2}{n}\r)\frac{|\G_n|}{\binom{n}{r}}\ge \l(1-\frac{c_2}{n}\r)\rho(\cH,r,n).
    \end{equation*}
    Letting $n\to\infty$ gives $g(\cH,r)\ge\rho(\cH,r)$.
    
\end{proof}

\begin{definition}\label{definition:tightcycle}
    For integers $t>m\ge2$, an $m$-uniform tight cycle of length $t$, denoted by $C^m_t$ is an $m$-graph on $[t]$ with $t$ edges $e_i=\{i,~{i+1},~\dots,~{i+m-1}\}$, $1\le i\le t$ (vertices are modulo $t$).
\end{definition}

We will use tight cycles of the forms $C^{m+1}_{2m+1}$ and $C^{m}_{m-1}$ as building blocks to give constructions that prove Theorem~\ref{theorem:tightcycle_construction}. Next, we introduce two lemmas that compute the parameters $f$ and $g$ of tight cycles.

\begin{lem}\label{lemma:f_tightcycle}
    For every inters $t>m\ge 2$, 
\begin{equation*}
    f(C^m_t)=\frac{m}{t}.
\end{equation*}
\end{lem}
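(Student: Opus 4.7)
The plan is to prove $f(C^m_t)=m/t$ by establishing matching upper and lower bounds. Both arguments are elementary; the upper bound comes from a specific evaluation and the lower bound from a symmetric averaging argument exploiting the cyclic structure of $C^m_t$.

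For the upper bound, I would plug in the uniform distribution $x_i = 1/t$ for all $i \in [t]$. Since every edge of $C^m_t$ has exactly $m$ vertices, $\sum_{i \in e} x_i = m/t$ for each edge $e$, so $f_{C^m_t}(1/t,\dots,1/t)=m/t$, which gives $f(C^m_t) \le m/t$.

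For the lower bound, fix an arbitrary $(x_1,\dots,x_t) \in \Delta_{t-1}$. For each $i \in [t]$, let $S_i = x_i + x_{i+1} + \cdots + x_{i+m-1}$ (indices modulo $t$) denote the sum corresponding to the edge $e_i$ of the tight cycle. By the definition of $f_{C^m_t}$, it suffices to show that $\max_{1 \le i \le t} S_i \ge m/t$. Summing over all edges,
\begin{equation*}
    \sum_{i=1}^t S_i = \sum_{i=1}^t \sum_{j=0}^{m-1} x_{i+j} = \sum_{k=1}^t x_k \cdot |\{(i,j) : i+j \equiv k \pmod{t},\ 0 \le j \le m-1\}| = m \sum_{k=1}^t x_k = m,
\end{equation*}
where we used that each index $k \in [t]$ appears in exactly $m$ of the cyclic edges (once as $i$, once as $i+1$, \dots, once as $i+m-1$). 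Hence the average of the $t$ quantities $S_i$ is $m/t$, so $\max_i S_i \ge m/t$, and therefore $f_{C^m_t}(x) \ge m/t$. Taking the minimum over $x \in \Delta_{t-1}$ gives $f(C^m_t) \ge m/t$.

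There is no real obstacle: the statement reduces to a one-line averaging once the cyclic double-counting is written out. The only point worth being careful about is verifying that each $x_k$ is counted exactly $m$ times in $\sum_i S_i$, which holds precisely because the edges form a tight cycle (every vertex belongs to exactly $m$ consecutive intervals of length $m$ on $\mathbb{Z}/t\mathbb{Z}$, and this uses $t > m$ so that the intervals do not wrap onto themselves).
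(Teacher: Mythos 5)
Your proof is correct and follows essentially the same approach as the paper's: the uniform distribution $x_i=1/t$ gives the upper bound, and summing $\sum_{i\in e} x_i$ over all $t$ cyclic edges to get $m$ gives the averaging lower bound. You simply spell out the double-counting step (each vertex lies in exactly $m$ edges of the tight cycle) more explicitly than the paper does.
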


\begin{proof}
    We will use the notations in Definition~\ref{definition:tightcycle}. For any $x\in \Delta_{t-1}$, let $x=(x_1,\dots,x_t)$. Then 
   \begin{equation*}
       t\cdot f_{C^m_t}(x)\ge \sum_{i=1}^t\sum_{i\in e_t}x_i=m\sum_{i=1}^tx_i=m.
   \end{equation*}
    Hence, $f(C^m_t)\ge m/t$. On the other hand, if we let $x=(1/t,\dots,1/t)$, then $f_{C^m_t}(x)=m/t$. Therefore, $f(C^m_t)=m/t$.
\end{proof}

\begin{lem}\label{lemma:g_tightcycle}
    Let $r\ge 3$ be an integer.
    \begin{itemize}
    \item[(1)] For every integer $m\ge 1$,
    \begin{equation*}
        g(C^{m+1}_{2m+1},r)\le\frac{(m+1)^r-m^r}{(2m+1)^{r-1}}.
    \end{equation*}
    \item[(2)] For every integer $3\le m\le r$,
    \begin{equation*}
        g(C^{m-1}_{m},r)\le\sum_{k=1}^{m-1}(-1)^{k+1}\binom{m}{k}\l(1-\frac{k}{m}\r)^r.
    \end{equation*}
\end{itemize}
\end{lem}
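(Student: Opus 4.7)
The plan for both bounds is to upper-bound $g(\cH,r) = \min_x g_{\cH,r}(x)$ by evaluating $g_{\cH,r}$ at the uniform vector $x^* = (1/t,\ldots,1/t) \in \Delta_{t-1}$, where $t$ is the vertex count of $\cH$. At this point $g_{\cH,r}(x^*) = N/t^r$, where $N$ counts the tuples $(y_1,\ldots,y_r) \in [t]^r$ whose value set is contained in some edge of $\cH$. Both parts therefore reduce to a combinatorial count of $N$.

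For part (1), take $\cH = C^{m+1}_{2m+1}$ with edges $e_i = \{i,i+1,\ldots,i+m\}$ modulo $2m+1$. I would count $N$ by a sliding-window argument. For any covered tuple, the set of indices $i$ with value set contained in $e_i$ forms a contiguous cyclic arc of length at most $m+1 < 2m+1$, so it is a proper arc with a well-defined first index $i^*$. Assigning each covered tuple to this first window, the tuples assigned to $e_i$ are those whose entries all lie in $e_i$ but not all in $e_{i-1}$, equivalently those in $e_i^r$ that use the vertex $i+m$ (the unique element of $e_i \setminus e_{i-1}$) at least once; there are exactly $(m+1)^r - m^r$ such tuples. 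Summing over all $2m+1$ windows yields $N = (2m+1)\bigl((m+1)^r - m^r\bigr)$, and dividing by $(2m+1)^r$ gives the claimed bound.

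For part (2), I first observe that $C^{m-1}_m$ coincides with the complete $(m-1)$-graph $\binom{[m]}{m-1}$, since its $m$ edges of size $m-1$ on vertex set $[m]$ already exhaust all $(m-1)$-subsets of $[m]$. Consequently $\sigma(y) = 1$ if and only if $\{y_1,\ldots,y_r\} \ne [m]$, i.e.\ $y$ viewed as a function $[r] \to [m]$ is not surjective. Inclusion-exclusion counts the surjections as $\sum_{k=0}^{m}(-1)^k\binom{m}{k}(m-k)^r$, so the number of non-surjective tuples is $\sum_{k=1}^{m}(-1)^{k+1}\binom{m}{k}(m-k)^r$; the $k=m$ term vanishes, and dividing by $m^r$ recovers the stated formula.

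The only place that genuinely requires care is the well-definedness of the ``first window'' in part (1); the remaining arithmetic in both parts is routine. I expect no further obstacles, since the definition of $g$ hands us the upper bound for free once we fix the evaluation point.
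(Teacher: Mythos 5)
Your proof is correct and takes essentially the same approach as the paper: both evaluate $g_{\cH,r}$ at the uniform vector, and your ``first window'' assignment in part~(1) is exactly the combinatorial content of the paper's telescoping coefficient argument (the paper proves the polynomial identity $g_{C^{m+1}_{2m+1},r}(x)=\sum_i(x_i+\cdots+x_{i+m})^r-\sum_i(x_i+\cdots+x_{i+m-1})^r$ for all $x\in\Delta_{2m}$ and then plugs in the uniform point, whereas you count directly at that point). For part~(2) both use inclusion--exclusion; your observation that $C^{m-1}_m=\binom{[m]}{m-1}$ reduces it cleanly to counting non-surjections and matches the paper's computation.
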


\begin{proof}
     \begin{itemize}
        \item [(1)] Throughout this proof the indices in $[2m+1]$ are modulo $2m+1$. For any $x=(x_1,\dots,x_{2m+1})\in X_{2m+1}$, we claim that
        \begin{equation}\label{equation:m+1_2m+1}
            g_{C^{m+1}_{2m+1},r}(x)=\sum_{i=1}^{2m+1}(x_i+\dots+x_{i+m})^r-\sum_{i=1}^{2m+1}(x_i+\dots+x_{i+m-1})^r.
        \end{equation}
        For any $y=(y_1,\dots,y_{r})\in[2m+1]^r$ such that $\sigma_{C^{m+1}_{2m+1}}(y)=1$, it suffices to show that the coefficient of the ordered monomial $x_{y_1}\dots x_{y_r}$ is $1$ in the right-hand side of (\ref{equation:m+1_2m+1}). Let $j$ be the minimum integer such that there exists $a\in [2m+1]$ such that $y_i\in\{a,\dots,a+j\}$, $\forall 1\le i\le r$. By definition, such $j$ must exist and satisfy $0\le j\le m$. Then the ordered monomial $x_{y_1}\dots x_{y_r}$ is contained in $(x_i+\dots+x_{i+m})^r$ for $i\in \{a+j-m,\dots,a\}$, which contributes $m-j+1$ to its coefficient; and is contained in $(x_i+\dots+x_{i+m-1})^r$ for $i\in \{a+j-m+1,\dots,a\}$, which contributes $-(m-j)$ to its coefficient. Hence its coefficient is exactly 1. This proves the claim.

        Now let $x=(1/(2m+1),\dots,1/(2m+1))$. We have
        \begin{equation*}
            g(C^{m+1}_{2m+1},r)\le g_{C^{m+1}_{2m+1},r}(x)=\frac{(m+1)^r-m^r}{(2m+1)^{r-1}}.
        \end{equation*}

        \item[(2)] For $x=(x_1,\dots,x_{m})\in X_{m}$, note that the polynomial $g_{C^{m-1}_{m},r}(x)$ is the sum of all ordered monomials of degree $r$ that is missing some $x_i$, each of them has coefficient 1. For any subset $S\in [m]$, observe that $(\sum_{i=1}^m x_i-\sum_{i\in S} x_i)^r$  is the sum of all ordered monomials of degree $r$ that is missing all $x_i$ with $i\in S$, each of them has coefficient 1. Hence by the Inclusion-Exclusion Principle, we have
        \begin{equation*}
            g_{C^{m-1}_{m},r}(x)=\sum_{k=1}^{m-1}\sum_{S\subset[m],|S|=k}(-1)^{k+1}(\sum_{i=1}^m x_i-\sum_{i\in S} x_i)^r.
        \end{equation*}

        Now let $x=(1/(2m+1),\dots,1/(2m+1))$. We have
        \begin{equation*}
            g(C^{m-1}_{m},r)\le g_{C^{m-1}_{m},r}(1/m,\dots,1/m)=\sum_{k=1}^{m-1}(-1)^{k+1}\binom{m}{k}\l(1-\frac{k}{m}\r)^r.
        \end{equation*}
    \end{itemize}
\end{proof}

Let $I_k$ be the $1$-uniform hypergraph with $k$ vertices and $k$ edges. Given a hypergraph $\cH$, let $\cH\cup I_k$ denote the disjoint union of $\cH$ and $I_k$. It is easy to check that $f(I_k)=1/k$ and $g(I_k,r)=k^{-r+1}$.

\begin{proof}[Proof of Theorem~\ref{theorem:tightcycle_construction}]
    By Lemma~\ref{lemma:f_tightcycle} and Lemma~\ref{lemma:f_union},
    \begin{equation*}
        f(C^m_t\cup I_k)=\l(k+\frac{t}{m}\r)^{-1}.
    \end{equation*}
    \begin{enumerate}
        \item[(1)]
         By Lemma~\ref{lemma:f}, Lemma~\ref{lemma:g} and the fact that $\t_r$ is non-increasing, when $\gamma\ge k+\frac{m}{m+1}=1/f(C^{m+1}_{2m+1}\cup I_{k-1})$, 
        \begin{equation*}
            \t_r(\gamma)\le g(C^{m+1}_{2m+1}\cup I_{k-1}).
        \end{equation*}
        
        By Lemma~\ref{lemma:g_tightcycle} and Lemma~\ref{lemma:g_union},
        \begin{equation*}
            g(C^{m+1}_{2m+1}\cup I_{k-1})\le\l(k-1+(2m+1)((m+1)^r-m^r)^{-\frac{1}{r-1}}\r)^{-r+1}.
        \end{equation*}
        Therefore,
        \begin{equation*}
            \t_r(\gamma)\le\l(k-1+(2m+1)((m+1)^r-m^r)^{-\frac{1}{r-1}}\r)^{-r+1}.
        \end{equation*}
       
        \item[(2)]
        By Lemma~\ref{lemma:f}, Lemma~\ref{lemma:g} and the fact that $\t_r$ is non-increasing, when $\gamma\ge k+\frac{1}{m-1}=1/f(C^{m-1}_{m}\cup I_{k-1})$, 
        \begin{equation*}
            \t_r(\gamma)\le g(C^{m-1}_{m}\cup I_{k-1}).
        \end{equation*}
        
        By Lemma~\ref{lemma:g_tightcycle} and Lemma~\ref{lemma:g_union},
        \begin{equation*}
            g(C^{m-1}_{m}\cup I_{k-1})\le\l(k-1+\l(\sum_{i=1}^{m-1}(-1)^{i+1}\binom{m}{i}\l(1-\frac{i}{m}\r)^r\r)^{-\frac{1}{r-1}}\r)^{-r+1}.
        \end{equation*}
        Therefore,
        \begin{equation*}
            \t_r(\gamma)\le\l(k-1+\l(\sum_{i=1}^{m-1}(-1)^{i+1}\binom{m}{i}\l(1-\frac{i}{m}\r)^r\r)^{-\frac{1}{r-1}}\r)^{-r+1}.
        \end{equation*}
    \end{enumerate}
\end{proof}

Fang, Gao, Ma and Song~\cite{Fang2023Onlocal} make use of the following hypergraphs.
\begin{definition}
    For integer $m\ge2$, let $W_m$ be a hypergraph on $[m+1]$ defined as following:
    \begin{equation*}
        W_m=\cup_{i=1}^m\{\{i,m+1\}\}\cup\{\{1,\dots,m\}\}.
    \end{equation*}
\end{definition}

It is not hard to check that $f(W_m)=m/(2m-1)$ and $g(W_m,3)=(5m+4)/9m$. Hence, by Lemma~\ref{lemma:f}, Lemma~\ref{lemma:f_union}, Lemma~\ref{lemma:g} and Lemma~\ref{lemma:g_union}, for every integers $k,m\ge1$ and real number $\gamma\ge k+\frac{m}{m+1}$, 
\begin{equation*}
    \t_3(\gamma)\le\l(k-1+3\sqrt{\frac{m+1}{5m+9}}\r)^{-2}.
\end{equation*}
This is strictly better than (\ref{equation:upper_bounds}) when $r=3$ and $m\ge 2$.

\section{Concluding remarks}
\begin{itemize}
    \item By Lemma~\ref{lemma:f} and Lemma~\ref{lemma:g} we know that
    \begin{equation}\label{equation:general_upperbound}
        \t_r(\gamma)\le \min\{g(\cH,r)|f(\cH)\ge 1/\gamma\}.
    \end{equation}

    Is this bound tight? All known correct constructions so far support this intuition. To begin with, can we show that $\F(C^2_3,r)$ provides the best constructions for $\t_r(\frac{3}{2})$? In other words, is it true that
    $$
    \t_r\l(\frac{3}{2}\r)=\frac{2^r-1}{3^{r-1}}?
    $$
    The case $r=3$ has been confirmed recently by Fang, Gao, Ma and Song~\cite{Fang2023Onlocal}. 
    \item For $0\le\epsilon<1$, we know from the proof of Theorem~\ref{theorem:tightcycle_construction} that $\t_r(k+\epsilon)\le \l(k-1+\t_r(1+\epsilon)^{-\frac{1}{r-1}}\r)^{-r+1}$. Is this tight? In other words, is it true that the best construction for $\t_r(k+\epsilon)$ comes from taking the disjoint union of the best construction for $\t_r(1+\epsilon)$ and $(k-1)$ cliques? This is true for $\epsilon=0$ by Theorem~\ref{theorem:Frankl-Huang-Rodl}.
\end{itemize}

\section*{Aknowledgement}
We would like to thank Hao Huang, Jie Ma and Hehui Wu for helpful discussions.


\bibliographystyle{abbrv}
\bibliography{refs}

\section*{Appendix A}
\begin{prop}
    For every integer $r\ge2$ and real number $\gamma>1$, the limits
    \begin{equation*}
        \t_r(\gamma)=\lim_{q\to\infty}t_r(q,\ceil{q/\gamma})
    \end{equation*}
    exist.
\end{prop}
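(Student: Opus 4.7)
The plan is to prove $\lim_{q\to\infty} s(q) = L$, where $s(q) := t_r(q, \lceil q/\gamma\rceil)$ and $L := \sup_q s(q) \in [0,1]$. First, applying Lemma~\ref{lemma:degenerate} with $p = \lceil q/\gamma\rceil$, together with the observation $\lceil Kq/\gamma\rceil \geq Kp - (K-1)$ (which follows from $q/\gamma > p-1$) and the fact that $t_r(q,p)$ is non-decreasing in $p$, yields $s(q) \leq s(Kq)$ for every $K \geq 1$. Hence $\{s(Kq_0)\}_{K \geq 1}$ is non-decreasing and bounded, and a standard diagonal subsequence argument (applied to $s(Kq_0 q_0')$) shows its limit equals $L$ for every $q_0$; in particular $\limsup_q s(q) = L$, reducing the task to showing $\liminf_q s(q) \geq L$.

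Next, Lemma~\ref{lemma:recursive_lower_bound} with $p_i = \lceil q_i/\gamma\rceil$, together with the ceiling identity $\lceil q_1/\gamma\rceil + \lceil q_2/\gamma\rceil \leq \lceil (q_1+q_2)/\gamma\rceil + 1$ and monotonicity of $t_r$ in $p$, gives the additive closure $s(q_1+q_2) \geq \min\{s(q_1), s(q_2)\}$, valid once $q_1, q_2$ are large enough that $\lceil q_i/\gamma\rceil \geq r$. Given $\epsilon > 0$, choose $q^*$ with $s(q^*) > L - \epsilon$ and set $B_\epsilon := \{q : s(q) > L - \epsilon\}$. The first step gives $\{Kq^*\}_{K \geq 1} \subseteq B_\epsilon$, and the additive closure makes $B_\epsilon$ into a numerical semigroup for sufficiently large elements. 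By Sylvester--Frobenius, it then suffices to exhibit \emph{two coprime} elements of $B_\epsilon$ to deduce that $B_\epsilon$ contains every sufficiently large integer.

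This coprime-pair step is the main obstacle, since the multiples $\{Kq^*\}$ all share the common factor $q^*$. To break it, I would use monotonicity within ``runs'' of constant $\lceil \cdot /\gamma\rceil$: because $t_r(q,p)$ is non-increasing in $q$ for fixed $p$, $s$ is non-increasing inside each such run, so if $q \in B_\epsilon$ is not the left endpoint of its run then $q - 1 \in B_\epsilon$ as well, and $\{q-1, q\}$ is coprime. It thus remains to find $K$ for which $Kq^*$ is not a left endpoint. For rational $\gamma = a/b$, a direct computation shows that every integer multiple of $a$ is a non-left-endpoint (since $\lceil na/\gamma\rceil = nb = \lceil (na-1)/\gamma\rceil$, using $b/a < 1$), so replacing $q^*$ by $a q^*$ makes every $K q^*$ a non-left-endpoint. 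For irrational $\gamma$, Weyl equidistribution of $\{Kq^*/\gamma \bmod 1\}_{K \geq 1}$ shows the density of $K$'s giving non-left-endpoints is $1 - 1/\gamma > 0$, so infinitely many work. Either way $B_\epsilon$ contains a coprime pair, hence all sufficiently large integers, giving $\liminf_q s(q) \geq L - \epsilon$; letting $\epsilon \to 0$ completes the proof.
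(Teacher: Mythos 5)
Your proposal is correct, but it takes a genuinely different and considerably more elaborate route than the paper's. The paper's argument is short and direct: given $q'$ with $t_r(q',p')\ge T-\epsilon$ where $p'=\ceil{q'/\gamma}$ and $T$ is the $\limsup$, it repeatedly subtracts $(q',p'-1)$ from a large pair $(q,p)$ via Lemma~\ref{lemma:recursive_lower_bound}, choosing the number of iterations $k'$ so that the residual pair $(q_{k'},p_{k'})$ satisfies $q_{k'}\ge q'$ and $q_{k'}-p_{k'}\le q'-p'$; elementary monotonicities $t_r(a,b)\ge t_r(a-1,b-1)\ge t_r(a,b-1)$ then give $t_r(q_{k'},p_{k'})\ge t_r(q',p')$, and the proof is done without any number theory. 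You instead package the same Lemma~\ref{lemma:recursive_lower_bound} into an additive-closure statement for the set $B_\epsilon$, combine it with multiplicative closure from Lemma~\ref{lemma:degenerate}, and then invoke Sylvester--Frobenius after producing a coprime pair in $B_\epsilon$; producing that coprime pair requires the observation that $s$ is non-increasing along runs of constant $\ceil{\cdot/\gamma}$, plus a case split between rational $\gamma$ (an explicit congruence computation) and irrational $\gamma$ (Weyl equidistribution). All of these steps are correct as far as I can see: the ceiling estimates $\ceil{Kq/\gamma}\ge Kp-(K-1)$ and $\ceil{(q_1+q_2)/\gamma}\ge \ceil{q_1/\gamma}+\ceil{q_2/\gamma}-1$ hold, the fractional-part criterion $\{q/\gamma\}\ge1/\gamma$ for a non-left-endpoint is right, and the semigroup argument is valid once $q_1,q_2$ are large enough that $\ceil{q_i/\gamma}\ge r$. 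One small inaccuracy: $s(q)\le s(Kq)$ does not make $\{s(Kq_0)\}_{K\ge1}$ literally non-decreasing in $K$ (only along divisibility chains), but this is harmless since all you actually need from that step is $\{Kq^*\}_{K\ge1}\subset B_\epsilon$. The trade-off is that your route buys a more ``structural'' picture of the set of good $q$ at the cost of importing equidistribution and the numerical-semigroup toolkit, whereas the paper's inductive cancellation trick keeps everything elementary and self-contained.
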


\begin{proof}
    Let $T=\limsup_{q\to\infty}t_r(q,\ceil{q/\gamma})$. Let $\epsilon>0$ be an arbitrarily small real number. We need to show that there exists $q_0=q_0(\epsilon)$ such that for all $q\ge q_0$, $t_r(q,\ceil{q/\gamma})\ge T-\epsilon$. By definition, there exists $q'$ such that $t_r(q',p')\ge T-\epsilon$ where $p'=\ceil{q'/\gamma}$. Clearly, $q'/(p'-1)>\gamma$. Let $\delta=\frac{1}{\gamma}-(p'-1)/q'$. Let $q_0$ be large enough such that for every $q\ge q_0$
    \begin{equation}\label{equation:condition1}
        \frac{\ceil{q/\gamma}-1}{q}\ge \frac{1}{\gamma}-\frac{\delta}{2}
    \end{equation}
    
    and
    \begin{equation}\label{equation:condition2}
        \frac{\delta q}{2(1-\frac{1}{\gamma}+\delta)}\ge q'.
    \end{equation}
    Fix a $q\ge q_0$, and let $p=\ceil{q/\gamma}$. For $k\ge 1$, let $q_{k}=q-kq'$, $p_k=p-k(p'-1)$. By Lemma~\ref{lemma:recursive_lower_bound}, as long as $t_r(q_k,p_k)$ is well defined,
    $$
    t_r(q,p)\ge \min\{t_r(q',p'),t_r(q_{k},p_{k})\}.
    $$
    
    Let $k'$ be the unique integer such that $0\le q_{k'}-p_{k'}\le q'-p'$. Not hard to check that
    $$
    k'=\l\lceil\frac{q-p+1}{q'-p'+1}\r\rceil-1.
    $$
    By (\ref{equation:condition1}), we have
    $$
    k'\le\frac{q(1-\frac{1}{\gamma}+\frac{\delta}{2})}{q'(1-\frac{1}{\gamma}+\delta)}.
    $$
    Hence, by (\ref{equation:condition2}),
    $$
    q_{k'}=q-k'q'\ge \frac{\delta q}{2(1-\frac{1}{\gamma}+\delta)}\ge q'.
    $$
    Note that by definition, $t_r(a,b)\ge t_r(a-1,b-1)\ge t_r(a, b-1)$. So we have,
    $$
    t_r(q_{k'},p_{k'})\ge t_r(q',p').
    $$
    Therefore,
    $$
    t_r(q,p)\ge t_r(q',p')\ge T-\epsilon.
    $$
\end{proof}

\end{document}